\setlist[itemize]{leftmargin=*}
\setlist[enumerate]{leftmargin=*}
\newenvironment{leftalign*}[1][\parindent]{\setlength\hangindent{#1}\start@align\tw@\st@rredtrue\m@ne}{\endalign}
\renewcommand{\phi}{\varphi}
\newcommand{\T}{\mathbb{T}}
\renewcommand{\hat}{\widehat}
\let\oldenumerate=\enumerate
	\def\enumerate{
	\oldenumerate
	\setlength{\itemsep}{5pt}
	}
\let\olditemize=\itemize
	\def\itemize{
	\olditemize
	\setlength{\itemsep}{5pt}
	}
\newtheorem{Theorem}{Theorem}[section]
\newtheorem{Lemma}[Theorem]{Lemma}
\newtheorem{Proposition}[Theorem]{Proposition}
\newtheorem{Corollary}[Theorem]{Corollary}
\newtheorem{Question}[Theorem]{Question}
\newtheorem{Example}[Theorem]{Example}
\newtheorem{Remark}[Theorem]{Remark}
\newtheorem{Conjecture}[Theorem]{Conjecture}
\numberwithin{equation}{section}
\begin{document}

\title{Crouzeix's conjecture for classes of matrices}

\author{Ryan O'Loughlin}
\address{School of Mathematics\\
University of Leeds\\
Leeds, U.K. LS2 9JT}
\email[]{ryan.oloughlin.1@ulaval.ca}

\author{Jani Virtanen}
\address{Department of Mathematics and Statistics\\
University of Reading\\
Reading, U.K.\\}
\email[]{j.a.virtanen@reading.ac.uk}
\date{}

\dedicatory{Dedicated to Albrecht B\"ottcher on the occasion of his seventieth birthday.}

\begin{abstract}
For a matrix $A$ which satisfies Crouzeix's conjecture, we construct several classes of matrices from $A$ for which the conjecture will also hold. We discover a new link between cyclicity and Crouzeix's conjecture, which shows that Crouzeix's Conjecture holds in full generality if and only if it holds for the differentiation operator on a class of analytic functions. We pose several open questions, which if proved, will prove Crouzeix's conjecture. We also begin an investigation into Crouzeix's conjecture for symmetric matrices and in the case of $3 \times 3$ matrices, we show Crouzeix's conjecture holds for symmetric matrices if and only if it holds for analytic truncated Toeplitz operators.
\vskip 0.5cm
\noindent Keywords: Numerical ranges, Crouzeix's Conjecture, Matrix inequalities, Hardy spaces, Norms of linear operators.
\vskip 0.5cm
\noindent MSC: 15A60, 15A39, 30H10, 47A30. 
\end{abstract}

\maketitle

\section{Introduction}
Finding an upper bound for the norm of an operator is one of the most fundamental endeavours in functional analysis and Crouzeix's conjecture provides a computational geometric approach to this. Despite its simplicity and strong numerical evidence, the conjecture has not been proved. The purpose of this article is to survey recent progress on the conjecture, as well as provide several classes of matrices which satisfy Crouzeix's conjecture and introduce novel approaches to study the conjecture. In particular, we discover a new link between cyclicity and Crouzeix's conjecture. This affirms the conjecture in the positive for several new classes of matrices and gives elegant shortened algebraic proofs of several previously known results obtained by detailed complex analysis techniques. Our investigation leads to several open questions, which if proved, will prove Crouzeix's conjecture.

Define the numerical range of an $n \times n$ matrix $A$ with complex entries by 
$$
W(A) := \{ \langle Ax , x \rangle : \langle x , x \rangle   = 1 \},
$$ 
where $\langle \cdot , \cdot \rangle$ refers to the Euclidean inner product on $\mathbb{C}^n$. Crouzeix's conjecture can be stated as follows.
\begin{Conjecture}\label{CC}
For all square complex matrices $A$ and all complex polynomials $p$, 
\begin{equation}\label{ECC}
    \|p(A)\| \leq 2 \sup _{z \in W(A)}|p(z)|,
\end{equation}
where $ \| \cdot \|$ denotes the standard operator norm for matrices.
\end{Conjecture}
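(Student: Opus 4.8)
The plan is to establish \eqref{ECC} with the sharp constant $2$ by normalizing the problem, reducing it to a single model operator, and then closing the gap left by the best available estimate. First I would normalize: since $W(\alpha A + \beta I) = \alpha W(A) + \beta$ and $\norm{p(\alpha A + \beta I)} = \norm{q(A)}$ for $q(z) = p(\alpha z + \beta)$, an affine change of variables lets me assume $W(A) \subseteq \overline{\D}$. The conjecture then becomes the assertion that $\overline{\D}$ is a $2$-spectral set for every $A$ with $W(A) \subseteq \overline{\D}$, namely $\norm{p(A)} \le 2\sup_{z \in \overline{\D}}|p(z)|$. By continuity of both sides in $A$ together with a density argument, I may further restrict to $A$ cyclic with distinct eigenvalues, and by the cyclicity link established in this paper it suffices to prove the bound for the differentiation operator on the corresponding class of analytic functions. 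The example $A = \minimatrix{0}{2}{0}{0}$, for which $W(A) = \overline{\D}$ while $\norm{A} = 2$, shows that with $p(z) = z$ the ratio in \eqref{ECC} equals exactly $2$; this fixes both the sharpness of the constant and the precise location from which the factor $2$ must emerge.

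The backbone of any such argument is the theorem of Crouzeix and Palencia that $W(A)$ is always a $(1 + \sqrt{2})$-spectral set, the strongest general bound currently known. Their proof proceeds through a Cauchy-transform functional calculus: to each $f$ analytic on $\Omega = \overline{W(A)}$ one associates a companion analytic function built from the boundary values of $\overline{f}$, and the resulting control of $f(A)$ rests on a convexity estimate whose size is exactly $\sqrt{2}$. It is known that the optimal universal constant lies in the interval $[2, 1 + \sqrt{2}]$, the lower endpoint coming from nilpotent examples such as the one above; thus the entire content of the conjecture is to replace $1 + \sqrt{2} \approx 2.414$ by $2$.

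The crux --- and the genuinely open part --- is precisely this improvement, and I would pursue it through the model operator rather than through the geometry of $W(A)$ alone. Concretely, I would attack the differentiation operator supplied by the cyclicity reduction using the explicit reproducing-kernel description of its functional calculus, in the same spirit as the truncated Toeplitz operator techniques that govern the $3 \times 3$ symmetric case treated in this paper, with the aim of computing $\norm{p(A)}$ sharply and verifying the factor $2$ directly. A complementary avenue is dilation-theoretic: to realize $p(A)$ as a compression of $p(N)$ for a normal operator $N$ with spectrum on $\dD$, since a suitable normal-dilation statement would yield the constant $2$ via the triangle inequality between an analytic and a co-analytic functional calculus.

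I expect the decisive obstacle to be the passage from $1 + \sqrt{2}$ to $2$ itself. Every known technique, including the Crouzeix--Palencia method and its refinements, stalls strictly above $2$, and no estimate that uses only the convexity of $W(A)$ can reach the conjectured value. Closing the gap appears to require genuinely exploiting the algebraic structure exposed by the cyclicity reduction --- the reproducing-kernel or truncated Toeplitz structure of the model --- rather than the first-order information encoded in numerical-range containment. Accordingly I would regard the reduction to the single differentiation model as the substantive part of the plan, and the sharp constant for that one operator as the point on which the whole conjecture turns.
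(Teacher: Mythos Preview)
The statement you are addressing is Crouzeix's conjecture itself, which is open; the paper does not contain a proof of it and explicitly surveys the Crouzeix--Palencia bound $1+\sqrt{2}$ as the current state of the art. Your proposal is not a proof but a research programme, and you say so yourself when you call the passage from $1+\sqrt{2}$ to $2$ ``the genuinely open part'' and observe that ``every known technique\dots stalls strictly above $2$.'' There is therefore nothing to compare: neither the paper nor your write-up establishes \eqref{ECC}.

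Separately, your opening normalization is not a valid reduction. An affine map $A \mapsto \alpha A + \beta I$ translates and scales $W(A)$ but does not alter its shape, so in general you cannot arrange $W(A) = \overline{\D}$. You can force $W(A) \subseteq \overline{\D}$, but then the assertion ``$\overline{\D}$ is a $2$-spectral set for $A$'' is strictly weaker than \eqref{ECC}, since $\sup_{z \in \overline{\D}}|p(z)| \ge \sup_{z \in W(A)}|p(z)|$; proving the former for all such $A$ would not settle the conjecture. The genuine reduction to the disk goes through a conformal map $\phi: W(A)^{o} \to \D$ together with control of $\phi(A)$, exactly as the paper describes at the start of Section~\ref{2}, and that step already carries the full difficulty. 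The cyclicity reduction you cite is correctly drawn from the paper (Corollary~\ref{induct} and Theorem~\ref{diffthm}), but it only reformulates the conjecture for the differentiation model $\mathcal{D}_{A,v}$; the paper does not claim, and you do not supply, the sharp bound for that operator.
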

\noindent

Crouzeix \cite{CC2matrix, CC11.08} showed that for each polynomial $p$, $ \|p(A)\| \leq 11.08 \sup _{z \in W(A)}|p(z)|$ and the bound was later improved to $1 + \sqrt{2}$ by Crouzeix and Palencia \cite{1root2}. Using the arguments developed by Crouzeix and Palencia, in \cite{CCpalenciaextension} Caldwell, Greenbaum and Li improved the bound to $2$ in some special cases. Since the operator norm and numerical range of a square matrix $A$ are invariant under unitary equivalence Conjecture \ref{CC} holds for $A$ if and only if it holds for all matrices in the unitary equivalence class of $A$. This is a long exploited fact that we will use throughout. Consequently it can be shown that all normal matrices satisfy the conjecture. Further, the conjecture has been shown to hold for several other classes of matrices including $2 \times 2$ matrices \cite{CC2matrix} (by Crouzeix), certain tridiagonal matrices \cite{CC3by3} (by Glader, Kurula and Lindstr\"{o}m), certain contractions with eigenvalues that are sufficiently well-separated \cite{CCgorkin} (by Bickel, Gorkin, Greenbaum, Ransford, Schwenninger and Wegert) and numerous other specialised cases. Numerical investigations have also strongly supported the conjecture (see, for example, the work of Greenbaum and Overton \cite{CCnum} and the references therein). Many results on the conjecture come from works in the more general setting of $K$-spectral sets, and other related questions involving intersections of $K$-spectral sets have also generated research interest (see for example the work of Badea, Beckermann, and Crouzeix \cite{spectral1}).

Throughout, all matrices are assumed to be square. We denote by $\text{conv} \{\mathcal{X}\}$ the convex hull of the set $\mathcal{X}$, by $W(A)^{o}$ the interior of $W(A)$, by $I_d$ the identity matrix, and by $\mathbb{D}$ the open unit disk in $\mathbb{C}$. 

This paper is organised as follows. Section \ref{2} first gives a background on some techniques which have led to partial proofs of Conjecture \ref{CC}. It then provides the preliminary results that we need for our study, and surveys recent relevant new results on the conjecture. In Section \ref{3}, 
we construct classes of matrices for which the conjecture holds. We then develop norm inequalities which prove several results relating cyclicity and Conjecture \ref{CC}, and as a consequence show that in order to prove Conjecture \ref{CC} it suffices to prove the conjecture for a differentiation operator on a specified space of entire functions. Section \ref{4} discusses the conjecture for symmetric matrices (i.e. matrices which are self transpose). In particular, we show that the conjecture holds for $3 \times 3$ symmetric matrices if and only if it holds for $3 \times 3$ truncated Toeplitz operators, and finally we also show that truncated Toeplitz operators serve as model operators for numerical ranges of $3 \times 3$ matrices.

\section{Previous work on Crouzeix's conjecture and some extensions}\label{2}

A commonly used approach to show certain matrices satisfy Conjecture \ref{CC} is to exploit von Neumann's inequality. For a contractive matrix $A$, von Neumann's inequality states that for an analytic function $g: \mathbb{D} \rightarrow \mathbb{C}$ which is continuous up to the boundary $\partial \mathbb{D}$ 
$$
\begin{gathered}
\|g(A)\| \leq  \sup _{z \in \mathbb{D}}|g(z)|. \\
\end{gathered}
$$
Let $\phi: W(A)^{o} \to \mathbb{D}$ be a bijective conformal mapping extended to a homeomorphism of $W(A)$ onto $\overline{\mathbb{D}}$ and let $X$ be an invertible matrix of the same dimension as $A$ such that $X \phi(A) X^{-1}$ is a contraction and
$$
\kappa(X)=\|X\| \cdot\left\|X^{-1}\right\| \leq 2 .
$$
Then, by von Neumann's inequality, for any polynomial $p$,
$$
\begin{aligned}
\|p(A)\| & =\left\|X^{-1}\left(p \circ \phi^{-1}\left(X \phi(A) X^{-1}\right)\right) X\right\| \\
& \leq 2 \max _{z \in \overline{\mathbb{D}}}\left|p \circ \phi^{-1}(z)\right|=2 \max _{z \in W(A)}|p(z)|.
\end{aligned}
$$
Thus, $A$ satisfies Conjecture \ref{CC}. The difficulty of the above approach is finding $\phi$ and $X$ with the requirements specified above. Nonetheless, this approach was used in \cite{CC3by3} to prove Conjecture \ref{CC} for certain $3 \times 3$ matrices.

A similar yet alternative approach to proving Conjecture \ref{CC} for a diagonalisable matrix $A$ is to  write $A = X \Lambda X^{-1}$ where $\Lambda $ is a diagonal matrix with the eigenvalues of $A$ on the diagonal and $X$ is a invertible matrix such that $\| X \| \| X^{-1} \| \leqslant 2$. Then for each polynomial $p$, we have $\| p(A) \| =\| X p(\Lambda) X^{-1} \|$ and
\begin{equation}\label{first}
    \| X p(\Lambda) X^{-1} \| \leqslant \| X \| \| X^{-1} \| \|  p(\Lambda) \| \leqslant 2 \|  p(\Lambda) \| \leqslant 2 \sup_{z \in \sigma(A)} |p(z)| \leqslant 2 \sup_{z \in W(A)} |p(z)|.
\end{equation}
Thus Conjecture \ref{CC} holds for $A$. This approach was used in \cite{CCgorkin}.

We wish to highlight that this approach can be generalised to the case where $A$ is similar to block diagonal matrices. Assume $A = X B X^{-1}$ where $B  = \operatorname{diag} ( B_1, B_2, \ldots B_n)$ and each $B_i$ is a square matrix such that for all polynomials $p$, 
$$\| p (B_i) \| \leqslant b \sup_{z \in W(A)} |p (z) |$$ for some $b \leqslant 2$ and where $\| X^{-1} \| \| X \| b \leqslant 2$. Then $\| p(A) \| = \| X p(B) X^{-1} \|$ and
$$
\| X p(B) X^{-1} \| \leqslant \| X \| \| X^{-1} \| \|  p(B) \| = \| X \| \| X^{-1} \| \max_i \|  p(B_i) \| \leqslant 2 \sup_{z \in W(A)} |p(z)|.
$$
Thus, under these assumptions Conjecture \ref{CC} will hold for $A$. 

For a matrix $A$, denote by $\mathbf{A}(W(A))$ the algebra of functions which are analytic on the interior $W(A)^o$ of $W(A)$ and continuous on the boundary of $W(A)$. There are at least four slightly different equivalent formulations of Conjecture \ref{CC} appearing in the literature. For the sake of completeness we give a proof of the following proposition.

\begin{Proposition}\label{allequiv}
     The following are equivalent: 
    \begin{enumerate}
        \item For all complex polynomials $p$, 
\begin{equation}\label{ECC2time}
    \|p(A)\| \leq 2 \sup _{z \in W(A)}|p(z)|.
\end{equation}
    \item For all functions $f$, analytic on an open neighbourhood of $W(A)$
    \begin{equation}\label{ECC3time}
    \|f(A)\| \leq 2 \sup _{z \in W(A)}|f(z)|.
\end{equation}
    \item For all functions $f$, analytic on an open neighbourhood of $W(A)$ such that \\$\sup _{z \in W(A) }|f(z)| =1 $,
    \begin{equation}
    \|f(A)\| \leq 2 .
    \end{equation}
    \item For all functions $f \in \mathbf{A}(W(A))$,
    \begin{equation}\label{ECC5time}
    \|f(A)\| \leq 2 \sup _{z \in W(A) }|f(z)|.
\end{equation}
    \end{enumerate}
\end{Proposition}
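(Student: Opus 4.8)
The plan is to establish the chain of implications $(2)\Rightarrow(4)\Rightarrow(1)\Rightarrow(3)\Rightarrow(2)$, or some convenient cycle among them. Several of these are essentially immediate from inclusions of function classes: a polynomial is analytic on an open neighbourhood of $W(A)$, and any function analytic on an open neighbourhood of $W(A)$ restricts to an element of $\mathbf{A}(W(A))$, so $(4)\Rightarrow(2)\Rightarrow(1)$ require no work beyond noting the relevant suprema only decrease under restriction. The equivalence of $(2)$ and $(3)$ is merely a normalisation: given nonzero $f$ with $M:=\sup_{z\in W(A)}|f(z)|$, apply $(3)$ to $f/M$ (the zero function is trivial), and conversely $(3)$ is the special case $M=1$ of $(2)$. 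So the entire content of the proposition is the implication $(1)\Rightarrow(4)$, i.e.\ upgrading the inequality from polynomials to the full algebra $\mathbf{A}(W(A))$.

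For $(1)\Rightarrow(4)$, fix $f\in\mathbf{A}(W(A))$. The idea is a standard approximation argument: since $W(A)$ is a compact convex set, it is polynomially convex, and by Mergelyan's theorem (or, since $W(A)$ is convex with connected complement, a more elementary Runge-type / Carathéodory-type argument) $f$ can be uniformly approximated on $W(A)$ by polynomials $p_n$, i.e.\ $\sup_{z\in W(A)}|f(z)-p_n(z)|\to 0$. Because $f$ is bounded, we also get $\sup_{z\in W(A)}|p_n(z)|\to\sup_{z\in W(A)}|f(z)|$. On the matrix side, $f(A)$ is defined via the holomorphic functional calculus (using that $f$ is analytic on a neighbourhood of $\sigma(A)\subseteq W(A)$), and one must check $\|p_n(A)-f(A)\|\to 0$; this follows because convergence in $\mathbf{A}(W(A))$ controls the functional calculus — e.g.\ via the Cauchy-integral representation $g(A)=\frac{1}{2\pi i}\int_{\Gamma}g(z)(zI_d-A)^{-1}\,dz$ over a contour $\Gamma$ in the neighbourhood, with $\|g(A)\|\le \frac{1}{2\pi}\,\mathrm{length}(\Gamma)\,\sup_\Gamma|g|\cdot\sup_\Gamma\|(zI_d-A)^{-1}\|$, together with a harmonic-majorant or maximum-principle step to pass from control of $g$ on $W(A)$ to control of $g$ on $\Gamma$. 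Then applying $(1)$ to each $p_n$ and letting $n\to\infty$ gives $\|f(A)\|\le 2\sup_{z\in W(A)}|f(z)|$.

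The one genuinely delicate point is the approximation: Mergelyan's theorem only gives uniform approximation on $W(A)$ itself, whereas $f(A)$ and the functional calculus live naturally on a neighbourhood of $W(A)$, and the contour $\Gamma$ in the Cauchy integral must be taken strictly outside $W(A)$ where we have no a priori uniform control of $p_n-f$. The cleanest way around this is to not use an exterior contour but instead to exploit that $f(A)$ can be recovered as a limit of $f_r(A)$ where $f_r(z)=f(rz)$ (or $f$ precomposed with a slight contraction of $W(A)$ toward an interior point), so that it suffices to approximate functions analytic on a neighbourhood of $\overline{W(A)}$; for such functions the Cauchy contour can be chosen inside the domain of analyticity but one still approximates on the compact set $W(A)$ and estimates on a nearby contour. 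An alternative, and perhaps the intended route given the paper's framing, is simply to invoke Mergelyan together with the observation that $g\mapsto g(A)$ is continuous from $(\mathbf{A}(W(A)),\|\cdot\|_\infty)$ into the matrices — a fact that itself requires a short argument (the subtlety is precisely that this continuity is not obvious and is morally equivalent to a weak form of the conjecture). I would therefore expect the proof to handle $(1)\Rightarrow(4)$ by first reducing, via a dilation $z\mapsto rz$ with $r\uparrow 1$ about an interior point of $W(A)$, to functions analytic on a full neighbourhood of $\overline{W(A)}$, then applying polynomial approximation on $\overline{W(A)}$ and passing to the limit using the holomorphic functional calculus with a fixed contour.
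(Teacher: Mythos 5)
Your proposal is correct and follows essentially the same route as the paper: the trivial inclusions among the function classes, the rescaling argument for the normalised version, and Mergelyan's theorem (polynomial density in $\mathbf{A}(W(A))$) to pass from polynomials to the full algebra. The paper's own proof is in fact terser than yours --- it simply invokes Mergelyan density and the rescaling, leaving implicit the continuity of $g \mapsto g(A)$ under uniform convergence on $W(A)$ that you rightly flag and handle via the contraction-toward-an-interior-point argument.
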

\begin{proof}
To show $(a) $ is equivalent to  $(b)$ note that any $f$ which is analytic on an open neighbourhood of $W(A)$ will also lie in $\mathbf{A}(W(A))$. The result now follows from Mergelyan's theorem, which shows polynomials are dense in $\mathbf{A}(W(A))$. The implication $(b) \implies (c)$ is immediate. The implication $(c) \implies (a)$ follows from the following standard rescaling argument. For a polynomial $p$, let $$
p'(z) = \frac{p(z)}{\sup _{z \in W(A)}|p(z)|}.
$$
Then $p'$ satisfies the hypothesis of $(c)$, so $\| p'\| \leqslant 2$, i.e., $\|p(A)\| \leq 2 \sup _{z \in W(A)}|p(z)|$. The equivalence of $(a)$ and $(d)$ follows from an identical argument to the argument showing $(a)$ and $(b)$ are equivalent.
\end{proof}

The following propositions have also been observed previously, but as our later discussions rely on these results, we include their proofs here.
\begin{Proposition}\label{multi}
    Let $A$ satisfy Conjecture \ref{CC}, let $\lambda, \mu \in \mathbb{C}$ and denote the transpose of $A$ by $A^T$. Then
    \begin{enumerate}
        \item $B = \mu A+\lambda I_d$ satisfies the conjecture,
        \item $A^T$ satisfies the conjecture,
        \item $A^*$ (the adjoint of $A$) satisfies Conjecture \ref{CC},
    \end{enumerate}
\end{Proposition}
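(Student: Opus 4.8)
The plan is to prove each of the three claims by exhibiting a suitable transformation that preserves both the operator norm and the numerical range (up to an affine change of variable), and then invoking Crouzeix's inequality for $A$ together with the equivalent formulations in Proposition \ref{allequiv}.

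For (a), the key observation is that $W(\mu A + \lambda I_d) = \mu W(A) + \lambda$, since $\langle (\mu A + \lambda I_d) x, x \rangle = \mu \langle Ax, x \rangle + \lambda$ for any unit vector $x$. Given a polynomial $p$, define $q(z) = p(\mu z + \lambda)$, which is again a polynomial. Then $p(B) = p(\mu A + \lambda I_d) = q(A)$, so by Conjecture \ref{CC} for $A$,
\begin{equation}
\|p(B)\| = \|q(A)\| \leq 2 \sup_{z \in W(A)} |q(z)| = 2 \sup_{z \in W(A)} |p(\mu z + \lambda)| = 2 \sup_{w \in W(B)} |p(w)|,
\end{equation}
where in the last step we substituted $w = \mu z + \lambda$ and used $W(B) = \mu W(A) + \lambda$. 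One should note the degenerate case $\mu = 0$ separately, though there $B = \lambda I_d$ is normal and the statement is trivial.

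For (b), I would use the fact that $W(A^T) = \overline{W(A)} = \{ \bar z : z \in W(A) \}$, together with the observation that $p(A^T) = p(A)^T$ for any polynomial $p$, and that transposition preserves the operator norm, $\|M^T\| = \|M\|$. The mild subtlety is that $\sup_{z \in W(A^T)} |p(z)| = \sup_{z \in W(A)} |p(\bar z)|$, and $p(\bar z)$ need not be a polynomial in $z$; instead write $\tilde p(z) = \overline{p(\bar z)}$ (the polynomial with conjugated coefficients), so that $|p(\bar z)| = |\tilde p(z)|$ and $\|p(A^T)\| = \|p(A)^T\| = \|p(A)\| = \|\overline{\tilde p(\bar A)}\|$; a cleaner route is to argue directly that $\|p(A^T)\| = \|\overline{p(A^T)}\| = \|\overline{p(A^T)}^T\|$ but the most transparent version is: since $p(A^T) = (p(A))^T$ we get $\|p(A^T)\| = \|p(A)\|$, and applying Crouzeix to $A$ with the polynomial $p$ gives $\|p(A)\| \le 2\sup_{W(A)}|p|$; then reindex $W(A) = \overline{W(A^T)}$ and note $\sup_{z \in \overline{W(A^T)}}|p(z)| = \sup_{w \in W(A^T)}|p(\bar w)| = \sup_{w \in W(A^T)}|\tilde p(w)|$, and apply the argument to $\tilde p$ in place of $p$ from the start. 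Part (c) then follows by combining (a) and (b): $A^* = \overline{A^T} = \overline{A}^T$, and conjugation of entries is the composition of transposition with... actually the cleanest path is $A^* = (\bar A)^T$; one checks $W(\bar A) = \overline{W(A)}$ and $\|p(\bar A)\| = \|\overline{\tilde p(A)}\| = \|\tilde p(A)\|$, so $\bar A$ satisfies the conjecture whenever $A$ does, and then (b) applied to $\bar A$ yields the result for $A^* = (\bar A)^T$.

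The main obstacle is purely bookkeeping: keeping straight that complex conjugation turns a polynomial $p(z)$ into $\tilde p(z) := \overline{p(\bar z)}$ rather than leaving it fixed, and verifying the three elementary identities $W(\mu A + \lambda I_d) = \mu W(A) + \lambda$, $W(A^T) = \overline{W(A)}$, and $\|M^T\| = \|M\| = \|\bar M\|$. None of these is deep, but the conjugation of coefficients must be threaded consistently through every supremum so that one is always comparing $\|p(\cdot)\|$ against $\sup |p(\cdot)|$ for the \emph{same} polynomial; this is the only place where a careless argument could go wrong.
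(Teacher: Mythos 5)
Parts (a) and (c) of your proposal are essentially sound, and (a) is exactly the paper's argument; for (c) the paper computes directly with $p(A^*)=(\tilde p(A))^*$ and $W(A^*)=\overline{W(A)}$, while you route through $\bar A$ and then transpose, which is equivalent bookkeeping. The genuine gap is in (b): the identity you build it on, $W(A^T)=\overline{W(A)}$, is false in general. The correct fact — and the one the paper uses — is $W(A^T)=W(A)$: for a unit vector $x$ the scalar $x^*A^Tx$ equals its own transpose, so $x^*A^Tx=x^TA\bar x=\langle A\bar x,\bar x\rangle$ with $\bar x$ again a unit vector. It is $A^*$ and $\bar A$, not $A^T$, whose numerical ranges are the reflection $\overline{W(A)}$. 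Concretely, for $A=\begin{pmatrix} i & 1\\ 0 & 0\end{pmatrix}$ the set $W(A^T)=W(A)$ is an ellipse with foci $0$ and $i$, which is not conjugation-symmetric, so $W(A^T)\neq\overline{W(A)}$.

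Moreover, even granting your reindexing, the chain in (b) does not close: it ends with $\|p(A^T)\|\leq 2\sup_{z\in W(A^T)}|\tilde p(z)|$, and ``applying the argument to $\tilde p$ from the start'' bounds $\|\tilde p(A^T)\|$, not $\|p(A^T)\|$; these differ in general, since $\tilde p(A^T)=\overline{p(\overline{A^T})}$ has norm $\|p(A^*)\|$, which need not equal $\|p(A^T)\|=\|p(A)\|$. The repair is exactly the correct identity: with $W(A^T)=W(A)$, part (b) is the paper's one-line argument $\|p(A^T)\|=\|p(A)^T\|=\|p(A)\|\leq 2\sup_{z\in W(A)}|p(z)|=2\sup_{z\in W(A^T)}|p(z)|$, with no conjugated coefficients needed. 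Once (b) is fixed this way, your (c) does go through: your verification that $\bar A$ satisfies Conjecture \ref{CC} (via $\|p(\bar A)\|=\|\tilde p(A)\|$, $|\tilde p(z)|=|p(\bar z)|$ and $W(\bar A)=\overline{W(A)}$) is correct, and applying (b) to $\bar A$ handles $A^*=(\bar A)^T$.
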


\begin{proof}
\noindent (a) For any polynomial $p(z)$, define $q(z)=p(\mu z+\lambda)$, which means $q(A)=p(\mu A+\lambda I_d)=p(B)$. Since Conjecture \ref{CC} holds for $A$, 
$
\|q(A)\| \leq 2 \sup _{z \in W(A)}|q(z)|
$ and so $\|p(B)\| \leq 2 \sup _{z \in W(A)}|p(\mu z+\lambda)|$. Since $W(B)=\mu W(A)+\lambda$, this implies $\quad\|q(B)\| \leq 2 \sup _{z \in W(B)}|p(z)|$.

\noindent (b) A short computation (see \cite[Chapter 1]{hornjohnson}) shows $W(A) = W(A^T)$, and as taking matrix transposes is norm invariant, for each polynomial $p$
$$
\|p(A^T)\| = \|p(A)^T\| = \|p(A)\| \leqslant 2 \sup_{z \in W(A)} |p(z)| = 2 \sup_{z \in W(A^T)} |p(z)| .
$$

\noindent (c) For a polynomial $p$, we write $p(z) = \sum_{k=0}^{N} p_k z^k$, where $p_k \in \mathbb{C}$. Note that $p(A^*) = (\Tilde{p}(A))^*$, where $\Tilde{p}(z)  = \sum_{k=0}^{N} \overline{p_k} z^k$. So as Conjecture \ref{CC} holds for the given matrix $A$,
\begin{align}
    \| p( A^*) \| &= \| (\Tilde{p}(A))^* \| = \| \Tilde{p}(A) \| \leqslant 2 \sup_{z \in W(A)} |   \Tilde{p} (z) | \\
    &\underbrace{=}_{*} 2 \sup_{z \in W(A^*)} |   \Tilde{p} (\overline{z}) | = 2 \sup_{z \in W(A^*)} |   \overline{\Tilde{p} (\overline{z})} | = 2 \sup_{z \in W(A^*)} |p (z) |,
\end{align}
where the starred equality holds because $z \in W(A) $ if and only if $\overline{z} \in W(A^*)$.
\end{proof}

For matrices $A_1, A_2, \ldots , A_n$ of dimensions $a_1 \times a_1, a_2 \times a_2, \ldots , a_n \times a_n$ respectively, let the  $(a_1 + a_2 + \cdots + a_n) \times (a_1 + a_2 + \cdots + a_n)$ matrix $A_1 \oplus A_2 \oplus \cdots \oplus A_n$ be defined by
\begin{equation}\label{reducible}
A_1 \oplus A_2 \oplus \cdots \oplus A_n = \begin{pmatrix}
A_1 & 0 & 0 & \cdots & 0 \\
0 & A_2 & 0 & \cdots & 0 \\
0 & 0 & A_3  & \cdots & 0 \\
\vdots & \vdots & \vdots & \ddots & \vdots \\
 0 &  0 & \cdots & \cdots & A_n
\end{pmatrix},
\end{equation}
where each $0$ denotes a block matrix of appropriate size. If a matrix, $M$, is unitarily equivalent to a matrix of the form \eqref{reducible} for $n >1$, then we say $M$ is \emph{reducible}. Otherwise we say $M$ is \emph{irreducible}.

\begin{Proposition}\label{directsum}
If matrices $A_1, A_2, \ldots , A_n$ satisfy Conjecture \ref{CC} then so does the matrix $A_1 \oplus A_2 \oplus \cdots \oplus A_n$.
\end{Proposition}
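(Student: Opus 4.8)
The plan is to reduce the statement to two elementary structural facts about block diagonal matrices together with an inclusion of numerical ranges. Write $A = A_1 \oplus A_2 \oplus \cdots \oplus A_n$. First, polynomials act blockwise: $p(A) = p(A_1) \oplus p(A_2) \oplus \cdots \oplus p(A_n)$ for every polynomial $p$, which is immediate from the block structure of matrix powers. Second, the operator norm of a block diagonal matrix is the maximum of the norms of its blocks, $\|B_1 \oplus \cdots \oplus B_n\| = \max_i \|B_i\|$; this follows by writing $\mathbb{C}^{a_1 + \cdots + a_n}$ as the orthogonal direct sum of the coordinate subspaces carrying the individual blocks and noting that $B_1 \oplus \cdots \oplus B_n$ leaves each summand invariant.

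Next I would record that $W(A_i) \subseteq W(A)$ for each $i$: given a unit vector $x \in \mathbb{C}^{a_i}$, extend it by zeros to a unit vector $\hat{x} \in \mathbb{C}^{a_1 + \cdots + a_n}$, and then $\langle A \hat{x}, \hat{x} \rangle = \langle A_i x, x \rangle$. (In fact one has the sharper identity $W(A) = \text{conv}\{W(A_1) \cup \cdots \cup W(A_n)\}$, but only the inclusions $W(A_i) \subseteq W(A)$ are needed here.) In particular $\sup_{z \in W(A_i)} |p(z)| \le \sup_{z \in W(A)} |p(z)|$ for every polynomial $p$.

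Combining these, for any polynomial $p$,
$$
\|p(A)\| = \max_i \|p(A_i)\| \le \max_i \, 2 \sup_{z \in W(A_i)} |p(z)| \le 2 \sup_{z \in W(A)} |p(z)|,
$$
where the middle inequality is Conjecture \ref{CC} applied to each $A_i$. This is precisely \eqref{ECC} for $A$, so $A$ satisfies the conjecture. There is no real obstacle in this argument; the only points requiring (routine) care are the two structural facts in the first step and making the numerical-range inclusion explicit. Alternatively, one could observe that this is the special case $X = I_d$, $b = 2$ of the block-diagonal-similarity remark preceding the proposition — but that remark already presupposes a bound of the form $\|p(A_i)\| \le 2\sup_{z \in W(A)}|p(z)|$, which is itself obtained via the same inclusion $W(A_i) \subseteq W(A)$, so the direct argument above is the cleanest route.
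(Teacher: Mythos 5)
Your proof is correct and follows essentially the same route as the paper: polynomials act blockwise, the norm of a block diagonal matrix is the maximum of the block norms, and $W(A_i) \subseteq W(A_1 \oplus \cdots \oplus A_n)$ gives the final bound (the paper justifies this inclusion via the identity $W(A_1\oplus\cdots\oplus A_n)=\operatorname{conv}\{W(A_1),\ldots,W(A_n)\}$, while you verify it directly with unit vectors extended by zeros, which is a negligible difference).
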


\begin{proof}
If $A_1, A_2, \ldots , A_n$ satisfy Conjecture \ref{CC} then for any polynomial $p$,
\begin{align}
    &\|p (A_1 \oplus A_2 \oplus \cdots \oplus A_n) \| \\
    &=\|p(A_1) \oplus p(A_2) \oplus \cdots \oplus p(A_n) \| \\
    &=\max \{ \|p(A_1)\|, \|p(A_2)\|, \ldots , \|p(A_n)\| \} \\
    &\leqslant 2 \max \left\{\sup_{z \in W(A_1)} | p(z)|, \sup_{z \in W(A_2)} | p(z)|, \ldots , \sup_{z \in W(A_n)} | p(z)|, \right\} \\
    & \leqslant 2 \sup_{z \in W(A_1 \oplus A_2 \oplus \cdots \oplus A_n)} | p(z)|,
\end{align}
where the final inequality holds because $$
\bigcup_{i=1}^n W(A_i)  \subseteq \text{conv} \{ W(A_1), W(A_2), \ldots , W(A_n) \} = W(A_1 \oplus A_2 \oplus \cdots \oplus A_n), 
$$
and so $A_1 \oplus A_2 \oplus \cdots \oplus A_n$ also satisfies the conjecture.
\end{proof}


Although the following proposition is in some sense new, it is fundamentally a result which follows quickly from the fact that Conjecture \ref{CC} holds for $2 \times 2$ matrices.

\begin{Proposition}\label{rank1}
Every rank one matrix satisfies Conjecture \ref{CC}.
\end{Proposition}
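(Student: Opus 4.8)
The plan is to realise an arbitrary rank one matrix as (unitarily equivalent to) a direct sum of a matrix of size at most $2$ and a zero block, and then to invoke Crouzeix's theorem for $2 \times 2$ matrices together with Proposition \ref{directsum}.

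First I would write a rank one matrix $A$ as $A = uv^*$ with nonzero vectors $u, v \in \mathbb{C}^n$, and set $M = \operatorname{span}\{u,v\}$, a subspace of dimension at most $2$. The point is that $M$ is a \emph{reducing} subspace for $A$: since $\operatorname{ran}(A) = \operatorname{span}\{u\} \subseteq M$ we have $A(M) \subseteq M$, and since $M^\perp \subseteq \{v\}^\perp = \ker A$ we have $A(M^\perp) = \{0\} \subseteq M^\perp$. Choosing an orthonormal basis of $\mathbb{C}^n$ adapted to the decomposition $\mathbb{C}^n = M \oplus M^\perp$ then exhibits $A$ as unitarily equivalent to $A_0 \oplus 0$, where $A_0$ is the matrix of the compression $A|_M$, of size $\dim M \le 2$, and $0$ is a zero block of size $n - \dim M$ (with the evident degenerate readings when $n \le 2$ or when $u, v$ are parallel).

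Next I would observe that each summand satisfies Conjecture \ref{CC}: the block $A_0$ does so trivially if it is $1\times 1$ (or $0 \times 0$), and by Crouzeix's result for $2 \times 2$ matrices otherwise; the zero block is normal, hence also satisfies the conjecture. Proposition \ref{directsum} then yields the conjecture for $A_0 \oplus 0$, and since the operator norm and numerical range are unitary invariants, it transfers to $A$ itself.

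There is no genuinely hard step here; the proof is short precisely because rank one forces so much structure. The only point that requires a moment's care is the verification that $M$ is reducing — that is, that \emph{both} $M$ and $M^\perp$ are $A$-invariant — since this is exactly what allows us to pass to an honest orthogonal direct sum (to which Proposition \ref{directsum} applies) rather than merely to a block upper-triangular form. Everything else is bookkeeping of the degenerate cases.
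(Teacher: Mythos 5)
Your argument is correct and follows essentially the same route as the paper: decompose $\mathbb{C}^n = \operatorname{span}\{u,v\} \oplus \operatorname{span}\{u,v\}^{\perp}$, note the rank one matrix is unitarily equivalent to a block of size at most $2$ direct-summed with a zero block, and conclude via the $2 \times 2$ case and Proposition \ref{directsum}. The only cosmetic difference is that the paper treats the parallel case $u \in \operatorname{span}\{v\}$ by observing $A$ is then a scalar multiple of an orthogonal projection (hence normal), whereas you absorb it as a $1 \times 1$ block; both are fine.
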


\begin{proof}
Every $n \times n$ rank one matrix, $A$, is of the form $ A(x) =  \langle x, v \rangle u$ for some $u,v \in \mathbb{C}^n$. Without loss of generality we may assume $u \notin \operatorname{span} v$, as if this is the case $A$ is a linear multiple of an orthogonal projection, which is normal and hence satisfies Conjecture \ref{CC}. Let $\mathcal{X} = \operatorname{span} \{u,v\}$. Then $A \mathcal{X} \subseteq \mathcal{X}$ and $A \mathcal{X}^{\perp} = \{ 0 \}$. Let $x_1, x_2$ be an orthonormal basis for $\mathcal{X}$ and let $x_3, x_4, ..., x_n$ be an orthonormal basis for $\mathcal{X}^{\perp}$, then with respect to the orthonormal basis $x_1, x_2, ..., x_n$, $A$ has the matrix representation
$$
A_1 \oplus Z = \begin{pmatrix}
A_1 & 0 \\
0 & Z 
\end{pmatrix},
$$
where $A_1$ is a $2 \times 2$ matrix and where $Z$ denotes the $(n-2) \times (n-2)$ matrix with each entry equal to $0$. By \cite[Theorem 1.1]{CC2matrix} every $2 \times 2$ matrix satisfies Conjecture \ref{CC}, and it is readily verifiable that $Z$ satisfies Conjecture \ref{CC}. So by Proposition \ref{directsum}, $A_1 \oplus Z$ satisfies Conjecture \ref{CC}, and thus by unitarily equivalence, so does $A$.
\end{proof}

Denote the $n \times n $ matrix with $1$ in the $ij'$th entry and $0$ in all other entries by $e_{ij}$. As $\lambda e_{ij}$ is rank one for each $\lambda \in \mathbb{C}$, by the proposition above $\lambda e_{ij}$ satisfies Conjecture \ref{CC}. This leads us to pose the following question.
\begin{Question}
    Is the set of all $n \times n $ matrices which satisfy Conjecture \ref{CC} closed under addition?
\end{Question}

\noindent Since the matrices $e_{ij}$ for $i,j =1, 2, \ldots, n$ form a basis for the space of all $n \times n $ matrices, a positive answer to the question above is equivalent to Conjecture \ref{CC} being true.

In the following, $\otimes$ denotes the tensor product (also known as Kronecker product) of matrices.
Further details of the construction of the tensor product of matrices may be found in \cite{symmetrictensors}.

\begin{Proposition}\label{newmulti}
Let $A$ satisfy Conjecture \ref{CC} and suppose that $N$ is a normal matrix. Then
$ N \otimes A$ and $ A \otimes N$ satisfy Conjecture \ref{CC}.
\end{Proposition}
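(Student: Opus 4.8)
The plan is to diagonalise the normal factor, which turns the Kronecker product into a direct sum of rescalings of $A$, and then to invoke the propositions already established.

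First I would use the spectral theorem to write $N = U D U^*$, where $U$ is unitary and $D = \operatorname{diag}(\lambda_1, \ldots, \lambda_m)$ with $\lambda_1, \ldots, \lambda_m$ the eigenvalues of $N$ (listed with multiplicity). By the mixed-product property of the Kronecker product,
$$
N \otimes A = (U \otimes I_d)(D \otimes A)(U \otimes I_d)^*,
$$
and since $U \otimes I_d$ is unitary, $N \otimes A$ is unitarily equivalent to $D \otimes A$. Because $D$ is diagonal, $D \otimes A$ is precisely the block-diagonal matrix $\lambda_1 A \oplus \lambda_2 A \oplus \cdots \oplus \lambda_m A$. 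Now each $\lambda_i A$ satisfies Conjecture \ref{CC} by Proposition \ref{multi}(a) (taking $\mu = \lambda_i$ and $\lambda = 0$), so by Proposition \ref{directsum} the direct sum $\lambda_1 A \oplus \cdots \oplus \lambda_m A$ does as well, and hence, by unitary invariance of the norm and the numerical range, so does $N \otimes A$.

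For $A \otimes N$, I would observe that $A \otimes N$ and $N \otimes A$ are unitarily equivalent via the perfect-shuffle permutation matrix $P$ (that is, $P(A \otimes N)P^* = N \otimes A$), which is unitary; the claim then follows from the case already treated. Equivalently, one may conjugate by $I_d \otimes U$ to reduce $A \otimes N$ to $A \otimes D$ and then apply the perfect shuffle to identify $A \otimes D$ with a block-diagonal matrix whose blocks are the $\lambda_i A$, and conclude as before.

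I do not expect a genuine obstacle here: the argument is bookkeeping with Kronecker products, the substantive inputs being Propositions \ref{multi} and \ref{directsum}. The only points meriting a line of care are the identity $D \otimes A = \lambda_1 A \oplus \cdots \oplus \lambda_m A$ for diagonal $D$, and the standard unitary equivalence $A \otimes N \sim N \otimes A$; both are elementary properties of the Kronecker product.
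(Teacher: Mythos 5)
Your proposal is correct and follows essentially the same route as the paper: diagonalise $N$ by a unitary, conjugate by $U \otimes I_d$ to reduce to $D \otimes A$, recognise this as a direct sum of scalar multiples of $A$, and apply Propositions \ref{multi} and \ref{directsum}, with $A \otimes N$ handled via the standard unitary equivalence with $N \otimes A$ (which the paper simply cites rather than realising via the perfect shuffle). No gaps.
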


\begin{proof}
First consider $ N \otimes A$. We have $U^* N U =  D $ for some unitary $U$ and diagonal $D$. It is known (see, for example, \cite[Section 3]{symmetrictensors}) that $
(U  \otimes I_d)^* = U^* \otimes I_d $. Thus $U  \otimes I_d$ is unitary and we have the unitary equivalence
\begin{align}
&(U  \otimes I_d)^* N \otimes A (U  \otimes I_d) = D  \otimes A .
\end{align}
Using the Kronecker product representation of $D  \otimes A$ we see $D  \otimes A$ is a direct sum of scalar multiples of $A$. Thus, by Propositions \ref{multi} and \ref{directsum}, $D  \otimes A$ will satisfy Conjecture \ref{CC} and by unitary equivalence, so will $N  \otimes A$.

To show $ A \otimes N$ satisfies Conjecture \ref{CC}, observe $N \otimes A$ and $A \otimes N$ are unitarily equivalent \cite[Proposition 2.3]{symmetrictensors}.
\end{proof}

The following result was stated in \cite[Page 3]{CCincreasing} without proof.
\begin{Proposition}\label{incr}
If Conjecture \ref{CC} holds for all $N \times N$ matrices then it holds for $n \times n$ matrices where $n < N$.
\end{Proposition}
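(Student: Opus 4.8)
The plan is a simple padding argument: enlarge an arbitrary $n \times n$ matrix to an $N \times N$ matrix without changing its numerical range or decreasing the norm of $p(A)$, then invoke the hypothesis on the larger matrix.

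First I would take an arbitrary $n \times n$ matrix $A$ (with $n < N$, so that $N - n \geq 1$) and fix a point $\lambda \in W(A)$ — say an eigenvalue of $A$, since $\sigma(A) \subseteq W(A)$ always. I would then set $\tilde A = A \oplus \lambda I_{N-n}$, which is an $N \times N$ matrix. The two facts I need about this direct sum are exactly the ones already recorded in the proof of Proposition \ref{directsum}: for every polynomial $p$,
\[
p(\tilde A) = p(A) \oplus p(\lambda) I_{N-n}, \qquad\text{so}\qquad \|p(\tilde A)\| = \max\{\|p(A)\|,\ |p(\lambda)|\} \geq \|p(A)\|,
\]
and
\[
W(\tilde A) = \operatorname{conv}\{W(A),\ \{\lambda\}\} = W(A),
\]
the last equality holding because $\lambda \in W(A)$ and $W(A)$ is convex.

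Finally I would apply the assumed validity of Conjecture \ref{CC} for the $N \times N$ matrix $\tilde A$: for every polynomial $p$,
\[
\|p(A)\| \leq \|p(\tilde A)\| \leq 2 \sup_{z \in W(\tilde A)}|p(z)| = 2 \sup_{z \in W(A)}|p(z)|,
\]
which is exactly Conjecture \ref{CC} for $A$. I do not foresee a genuine obstacle here; the only points requiring (minor) care are recalling that the numerical range of a direct sum is the convex hull of the numerical ranges of the summands, and that the padding scalar must be chosen inside $W(A)$ so that the numerical range does not enlarge. (Alternatively, one could prove the statement by padding a single dimension at a time and inducting downward from $N$, using only the identity $W(M \oplus [\lambda]) = \operatorname{conv}\{W(M), \{\lambda\}\}$.)
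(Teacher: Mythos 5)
Your proof is correct and is essentially the paper's argument: pad $A$ with a scalar block at a point of $W(A)$, note the numerical range is unchanged and the norm of $p$ of the padded matrix dominates $\|p(A)\|$, then apply the hypothesis to the $N \times N$ matrix. In fact your version is slightly cleaner, since by using only the inequality $\|p(\tilde A)\| \geq \|p(A)\|$ you avoid the paper's case distinction (choosing a point $d \in W(A)$ with $|p(d)| \leq \|p(A)\|$) and the dimension-by-dimension induction.
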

\begin{proof}
It suffices to prove the statement in the case that $n+1 = N$, as then the result will follow inductively. Let $A$ be an $n \times n$ matrix and let $p$ be a polynomial. If 
\begin{equation}\label{WLOG}
    \| p(A) \| < \inf_{ z \in W(A)} |p(z)|,
\end{equation}
then $\| p(A) \| \leqslant 2 \sup_{ z \in W(A)} |p(z)|$. So we can assume without loss of generality that there exist a $d \in W(A)$ such that $|p(d)| \leqslant \| p(A) \|$. Set $B = \begin{pmatrix}
A & 0 \\
0 & d
\end{pmatrix}.
$ By assumption Conjecture \ref{CC} holds for $B$. So,
$W(B) = \text{conv} \{ W(A), d \} = W(A)$, and thus
$$
\| p(A) \| = \| p(B) \| \leqslant 2 \sup_{z \in W(B)} |p(z)| =  2 \sup_{z \in W(A)} |p(z)|.
$$ \qedhere
\end{proof}

The recent thesis \cite{CCthesis} contains many new results (some of which are also contained in a paper by the same author \cite{CCLi}) centred on norm attaining vectors of $\|f(A)\|$, where $f \in \mathbf{A}(W(A))$. By Proposition \ref{allequiv} we see that Conjecture \ref{CC} holds if and only if
\begin{equation}\label{CCsup}
    \sup_{f \in \mathbf{A}(W(A))} \frac{\|f(A)\|}{\max _{z \in W(A)}|f(z)|} \leqslant 2.
\end{equation}
It was first observed in \cite[Theorem 2.1]{CC2matrix} that there are functions $\hat{f}$ which attain the supremum in \eqref{CCsup} and that such functions are of the form $\mu B \circ \phi$, where $\mu \in \mathbb{C}$, $\phi$ is any conformal mapping from $W(A)^o$ to $\mathbb{D}$ and 
$$
B(z)=\exp (i \gamma) \prod_{j=1}^m \frac{z-\alpha_j}{1-\bar{\alpha}_j z}, \quad m \leq n-1, \quad\left|\alpha_j\right|<1, \quad \gamma \in \mathbb{R}
$$
is a Blaschke product of degree $m$. Such functions are called \emph{extremal functions} for $A$ and we will denote the extremal function for $A$ by $\hat{f}$ throughout. If one assumes that ${\max _{z \in W(A)}|\hat{f}(z)|} = 1$, then $\mu=1$, so that $\hat{f}$ is a function of the form $B \circ \phi$.

We summarise some results in \cite[Section 2.2]{CCthesis} related to the uniqueness of extremal functions in the following theorem.

\begin{Theorem}\label{Jmatrix}
    \begin{enumerate}
        \item For an $n \times n$ nilpotent Jordan block $$
         J=\begin{pmatrix}
0 & 1 & 0 & \cdots & 0 \\
0 & 0 & 1 & \cdots & 0 \\
0 & 0 & 0 & \ddots & 0 \\
0 & 0 & 0 & \cdots & 1 \\
0 & 0 & 0 & \cdots & 0
\end{pmatrix},
        $$ the unique (up to scalar multiplication) extremal function for $J$ is $\hat{f} (z) = z^{n-1}$.
        \item The extremal function for the matrix 
$$\begin{pmatrix}
    0 & 1 & 0 \\
    0 & 0 & \frac{1}{\sqrt{3}} \\
     0 & 0 & 0
\end{pmatrix}$$ is not unique.
    \end{enumerate}
\end{Theorem}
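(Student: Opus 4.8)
The plan is to treat both parts through the same mechanism, which rests on the fact that the matrices in question are nilpotent weighted shifts, so their numerical ranges are disks centred at $0$; a diagonal similarity combined with von Neumann's inequality then controls $\|\hat f(A)\|$. For any weighted shift $A$ one has $\operatorname{diag}(1,e^{i\theta},e^{2i\theta},\dots)\,A\,\operatorname{diag}(1,e^{i\theta},e^{2i\theta},\dots)^{-1}=e^{i\theta}A$, so $W(A)$ is invariant under all rotations about the origin; being compact and convex, it is therefore a closed disk $\{|z|\le R\}$ with $R=\max\{\operatorname{Re} z:z\in W(A)\}=\lambda_{\max}\!\big(\tfrac12(A+A^{*})\big)$. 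For the $n\times n$ nilpotent Jordan block $J$ (with $n\ge 2$) this gives $R=r_n:=\cos\frac{\pi}{n+1}\in(0,1)$, half the top eigenvalue of the path-graph adjacency matrix; for the matrix in part (b), call it $M$, a $3\times 3$ eigenvalue computation gives $R=1/\sqrt 3$. In either case $\phi(z)=z/R$ is the conformal map $W(A)^{o}\to\D$, so by the description of extremal functions recalled above every extremal function is, up to a scalar, of the form $\hat f(z)=B(z/R)$ with $B$ a Blaschke product of degree at most $n-1$; as $\|B\|_{H^{\infty}(\D)}=1$ for any Blaschke product, normalising the extremal function lets us absorb the unimodular scalar and take $\hat f(z)=B(z/R)$ with $\sup_{z\in W(A)}|\hat f(z)|=1$.

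For part (a) the key identity is $J/r_n=D\,J\,D^{-1}$ with $D=\operatorname{diag}(1,r_n,r_n^{2},\dots,r_n^{n-1})$, so $\hat f(J)=B(J/r_n)=D\,B(J)\,D^{-1}$; since $\|D\|=1$, $\|D^{-1}\|=r_n^{-(n-1)}$ and $\|B(J)\|\le\|B\|_{\infty}=1$ by von Neumann's inequality (as $J$ is a contraction), this yields $\|\hat f(J)\|\le r_n^{-(n-1)}$. The choice $\hat f(z)=r_n^{-(n-1)}z^{n-1}$ attains the bound, because $J^{n-1}=e_{1n}$ has norm $1$; hence $z^{n-1}$ is extremal for $J$. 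For uniqueness I would trace equality back through the chain: if $v$ is a unit vector with $\|\hat f(J)v\|=r_n^{-(n-1)}$, then
\[
r_n^{-(n-1)}=\|D\,B(J)\,D^{-1}v\|\le\|D\|\,\|B(J)\|\,\|D^{-1}v\|\le\|D^{-1}v\|\le\|D^{-1}\|=r_n^{-(n-1)},
\]
so $v$ is a norming vector of $D^{-1}$, and since the top singular value $r_n^{-(n-1)}$ of $D^{-1}$ is simple, $v=e_n$ up to a unimodular factor. Writing $b_0,b_1,\dots$ for the Taylor coefficients of $B$ one has $\|\hat f(J)e_n\|^{2}=\sum_{k=0}^{n-1}r_n^{-2k}|b_k|^{2}$, so equality forces $\sum_{k=0}^{n-1}r_n^{2(n-1-k)}|b_k|^{2}=1$; together with $\sum_{k\ge 0}|b_k|^{2}=\|B\|_{H^{2}}^{2}=1$ and $r_n<1$, this is possible only if $b_k=0$ for $k\neq n-1$ and $|b_{n-1}|=1$, i.e. $B(w)=e^{i\gamma}w^{n-1}$, so $\hat f$ is a scalar multiple of $z^{n-1}$.

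For part (b) the same reasoning applies, but the diagonal similarity is degenerate: $\sqrt3\,M=D^{-1}J D$ with $D=\operatorname{diag}(1,\sqrt3,\sqrt3)$, so for $\hat f(z)=B(\sqrt3\,z)$ one has $\hat f(M)=D^{-1}B(J)D$ and therefore $\|\hat f(M)\|\le\|D^{-1}\|\,\|D\|\,\|B(J)\|\le 1\cdot\sqrt3\cdot1=\sqrt3$; since every extremal function has this form, the supremum in \eqref{CCsup} for $M$ is at most $\sqrt3$. It then suffices to exhibit two non-proportional extremal functions attaining $\sqrt3$. With $B(w)=w$ one gets $\hat f_1(z)=\sqrt3\,z$, for which $\|\hat f_1(M)\|=\sqrt3\,\|M\|=\sqrt3$ and $\sup_{z\in W(M)}|\hat f_1(z)|=1$; with $B(w)=w^{2}$ one gets $\hat f_2(z)=3z^{2}$, for which $\|\hat f_2(M)\|=3\|M^{2}\|=3/\sqrt3=\sqrt3$ and $\sup_{z\in W(M)}|\hat f_2(z)|=1$ (here $\|M\|=1$ and $\|M^{2}\|=1/\sqrt3$ since $M$ is a weighted shift with weights $1,1/\sqrt3$). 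Both attain the ratio $\sqrt3$, so both are extremal; being of different degrees they are not scalar multiples of each other, and hence the extremal function for $M$ is not unique.

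The one genuinely delicate point is the equality analysis in part (a): the inequality $\|\hat f(J)\|\le r_n^{-(n-1)}$ is immediate, but extracting from equality \emph{both} that the norming vector must be $e_n$ and that $B$ must equal $e^{i\gamma}w^{n-1}$ uses the simplicity of the top singular value of $D^{-1}$ together with the strict inequality $r_n<1$. Part (b) presents no real obstacle once the similarity $\sqrt3\,M=D^{-1}JD$ is spotted: the entry $1/\sqrt3$ of $M$ is precisely the value for which the ratios delivered by $z$ and by $z^{2}$ coincide — for a weight $a$ in that position they are $2/\sqrt{1+a^{2}}$ and $4a/(1+a^{2})$, equal exactly when $a=1/\sqrt3$ — and the diagonal-similarity bound shows that this common value is the maximum.
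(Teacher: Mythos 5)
Your argument is correct, and it is worth noting that the paper itself does not prove this theorem at all: it is stated as a summary of results from the cited thesis \cite{CCthesis}, so your write-up is a genuinely self-contained proof rather than a variant of an argument in the text. Your route uses only elementary ingredients already quoted in Section 2: the structure theorem that every function attaining the supremum in \eqref{CCsup} is $\mu B\circ\phi$ with $\deg B\le n-1$, the fact that $W$ of a nilpotent weighted shift is a disk of radius $\lambda_{\max}\bigl(\tfrac12(A+A^*)\bigr)$, the diagonal similarities $J/r_n=DJD^{-1}$ and $\sqrt3\,M=D^{-1}JD$ (both of which I checked and which are correct), and von Neumann's inequality for the contraction $J$. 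The equality analysis in part (a) is the delicate step and it holds up: equality in the chain forces the extremal vector to be $e_n$ because the top singular value of $D^{-1}$ is simple (here $r_n=\cos\frac{\pi}{n+1}<1$ is needed), and then $\sum_{k=0}^{n-1}|b_k|^2 r_n^{2(n-1-k)}=1$ together with $\sum_{k\ge0}|b_k|^2=\|B\|_{H^2}^2=1$ kills every coefficient except $b_{n-1}$, giving $B(w)=e^{i\gamma}w^{n-1}$; note you did not even need the a priori degree bound at this point. Part (b) is also sound: the similarity bound pins the supremum at $\sqrt3$, and both $\sqrt3\,z$ and $3z^2$ attain it while not being scalar multiples of each other. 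Two small caveats you should make explicit: part (a) implicitly assumes $n\ge2$ (for $n=1$ the numerical range is a point and the notion of extremal function degenerates), and your uniqueness argument uses that \emph{every} supremum-attaining function has the Blaschke form $\mu B\circ\phi$, i.e.\ the stronger reading of the quoted result from \cite{CC2matrix}, which is indeed how the paper states it.
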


Through the use of extremal functions, an alternative proof that certain $2 \times 2$ matrices satisfy Conjecture \ref{CC} is presented in \cite[Section 2.3.1]{CCthesis}.



\section{Cyclicity}\label{3}

This section gives a new link between cyclicity and Conjecture \ref{CC}, and consequently we show that in order to prove Conjecture \ref{CC} in full generality one must only verify Conjecture \ref{CC} for the differentiation operator on a class of entire functions (see Theorem \ref{diffthm}). We also provide several shortened proofs of previously known results.

For an $n \times n$ matrix $A$ and a column vector $y \in \mathbb{C}^n$, the cyclic subspace generated by $y$ is $\operatorname{span} \{y, Ay, A^2y, \ldots \}$. We say that $y $ is cyclic for $A$ if the cyclic subspace generated by $y$ is $\mathbb{C}^n$. We say $A$ is cyclic if there exists a $y \in \mathbb{C}^n$ which is cyclic for $A$. Studying cyclicity properties of matrices (or the related field of Krylov subspaces) are active areas of research.

The following theorem establishes a novel link between Conjecture \ref{CC} and cyclicity. For an extremal function $\hat{f}$ for $A$, if $x$ is such that $\|\hat{f}(A) x\| = \|\hat{f}(A)\| \| x \|$, then $x$ is called a corresponding \emph{extremal vector} for $A$.

\begin{Theorem}\label{maincor}
    Let $A$ be a $n \times n $ matrix, let $\mathcal{V} \subseteq \mathbb{C}^n$ be such that $\dim \mathcal{V} \leqslant 2$ and let $A_{\mathcal{V}}:= P_{\mathcal{V}}A_{|\mathcal{V}} : \mathcal{V} \to \mathcal{V}$ (where $P_{\mathcal{V}}$ denotes the orthogonal projection onto ${\mathcal{V}}$) be the compression of $A$ to $\mathcal{V}$. 
    \begin{enumerate}
        \item If $ \| \hat{f} (A) \| = \| \hat{f} ( A_{\mathcal{V}})\|$, where $\hat{f}$ is an extremal function for $A$, then $A$ satisfies Conjecture \ref{CC}.
        \item If the cyclic subspace generated by an extremal vector $x$ for $A$ is one or two dimensional, then $A$ satisfies Conjecture \ref{CC}.
        \item If $A$ is a $3 \times 3$ matrix and an extremal vector $x \in \mathbb{C}^3$ is not cyclic for $A$, then $A$ satisfies Conjecture \ref{CC}. In particular $3 \times 3$ non-cyclic matrices satisfy Conjecture \ref{CC}.
    \end{enumerate}
\end{Theorem}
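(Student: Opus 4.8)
The plan is to prove the three parts in order, with part (a) doing the real work and parts (b) and (c) following as consequences. For part (a), suppose $\|\hat f(A)\| = \|\hat f(A_{\mathcal V})\|$ for an extremal function $\hat f$ for $A$. Since $\dim\mathcal V \leq 2$, the compression $A_{\mathcal V}$ is a matrix of size at most $2\times 2$, and by \cite[Theorem 1.1]{CC2matrix} together with the trivial $1\times 1$ case, $A_{\mathcal V}$ satisfies Conjecture \ref{CC}. The key auxiliary fact I would use is that the numerical range of a compression is contained in the numerical range of the original matrix: $W(A_{\mathcal V}) \subseteq W(A)$, which is immediate from the definition since unit vectors in $\mathcal V$ are unit vectors in $\mathbb C^n$. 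Now, writing $C := \sup_{z\in W(A)}|\hat f(z)|$, I would argue
\begin{align}
\|\hat f(A)\| &= \|\hat f(A_{\mathcal V})\| \leq 2 \sup_{z\in W(A_{\mathcal V})}|\hat f(z)| \leq 2\sup_{z\in W(A)}|\hat f(z)| = 2C.
\end{align}
But $\hat f$ is extremal, so $\|\hat f(A)\|/C$ equals the supremum in \eqref{CCsup}; hence that supremum is $\leq 2$, and by Proposition \ref{allequiv} (via \eqref{CCsup}) Conjecture \ref{CC} holds for $A$. One subtlety to address carefully: $\hat f$ is of the form $B\circ\phi$ with $B$ a Blaschke product, so it lies in $\mathbf A(W(A))$, and one must make sure the functional calculus $\hat f(A_{\mathcal V})$ makes sense — but $W(A_{\mathcal V})\subseteq W(A)$ ensures $\hat f$ is analytic on a neighbourhood of $W(A_{\mathcal V})$, so this is fine. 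The one genuine gap is whether $\|\hat f(A_{\mathcal V})\|$ can legitimately be compared this way; I expect this is where care is needed, and it is the main obstacle of the argument.

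For part (b), let $x$ be an extremal vector for $A$ corresponding to $\hat f$, and let $\mathcal V$ be the cyclic subspace generated by $x$, so $\dim\mathcal V \leq 2$ by hypothesis. The crucial observation is that $\mathcal V$ is invariant under $A$ (being a cyclic subspace), so that $A_{\mathcal V} = P_{\mathcal V} A_{|\mathcal V}$ is just the restriction $A_{|\mathcal V}$, and moreover $p(A)x = p(A_{\mathcal V})x$ for every polynomial $p$, hence $\hat f(A)x = \hat f(A_{\mathcal V})x$ by approximating $\hat f$ by polynomials (Mergelyan, as in Proposition \ref{allequiv}). Since $x\in\mathcal V$ with $\|x\|$ normalised, $\|\hat f(A)\|\|x\| = \|\hat f(A)x\| = \|\hat f(A_{\mathcal V})x\| \leq \|\hat f(A_{\mathcal V})\|\|x\| \leq \|\hat f(A)\|\|x\|$, forcing $\|\hat f(A)\| = \|\hat f(A_{\mathcal V})\|$. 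Part (a) then applies directly. Here the point needing attention is the invariance argument making the restriction and compression coincide and letting polynomials act identically on $x$.

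For part (c), let $A$ be $3\times 3$ and let $x$ be an extremal vector that is not cyclic for $A$; then the cyclic subspace generated by $x$ has dimension at most $2$, so part (b) gives the conclusion. For the final sentence: if $A$ is a non-cyclic $3\times 3$ matrix, then no vector is cyclic for $A$, so in particular the extremal vector $x$ is not cyclic, and the previous sentence applies. I would close by noting that the existence of an extremal function $\hat f$ and a corresponding extremal vector is guaranteed by the compactness discussion recalled after Proposition \ref{incr} (following \cite[Theorem 2.1]{CC2matrix} and \cite{CCthesis}), so every matrix to which we apply this machinery does have the objects we need. The overall structure is thus a short reduction chain $(c)\Leftarrow(b)\Leftarrow(a)$, with the substantive content concentrated in the compression-numerical-range inequality and the $2\times 2$ case of the conjecture.
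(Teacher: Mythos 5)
Your proof is correct and takes essentially the same route as the paper's: part (a) combines the $2\times 2$ (and $1\times 1$) case of the conjecture with the inclusion $W(A_{\mathcal V})\subseteq W(A)$ and the extremality of $\hat{f}$, while parts (b) and (c) reduce to (a) via the $A$-invariance of the cyclic subspace generated by the extremal vector. Your sandwich argument in (b) just spells out the norm equality $\|\hat{f}(A)\| = \|\hat{f}(A_{\mathcal V})\|$ that the paper asserts directly, and the comparison you flag as a possible gap is harmless for exactly the reason you give ($W(A_{\mathcal V})\subseteq W(A)$ and approximation by polynomials).
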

\begin{proof}
    $(a)$ As $A_{\mathcal{V}}$ is either a $1 \times 1$ or $2 \times 2$ matrix and both of these satisfy Conjecture \ref{CC} (see \cite{CC2matrix}), $A_{\mathcal{V}}$ satisfies Conjecture \ref{CC}, and it is readily checked that $W(A_{\mathcal{V}}) \subseteq W(A)$. So 
    $$
    \sup_{f \in \mathbf{A}(W(A))} \frac{\|f(A)\|}{\max _{z \in W(A)}|f(z)|} = \frac{\|\hat{f}(A)\|}{\max _{z \in W(A)}|\hat{f}(z)|} \leqslant \frac{\|\hat{f}(A_{\mathcal{V}})\|}{\max _{z \in W(A_{\mathcal{V}})}|\hat{f}(z)|} \leqslant   2.
    $$
   \noindent $(b)$ Set $\mathcal{V}$ to be the cyclic subspace generated by $x$. Since $A(\mathcal{V}) \subseteq \mathcal{V}$, we have $$\| \hat{f} (A) \| =  \| \hat{f} (A)_{|\mathcal{V}} \| =  \| \hat{f} ( A_{\mathcal{V}})\|.$$ Thus by part $(a)$, the matrix $A$ satisfies Conjecture \ref{CC}.

   \noindent $(c)$ If $A$ is a $3 \times 3$ matrix and $x$ is not cyclic for $A$, then clearly the cyclic subspace generated by $x$ is one or two dimensional. Thus the result follows from part $(b)$.
\end{proof}

\begin{Remark}
One could restate the theorem above so that if for each polynomial $p$, there exists a two dimensional subspace $\mathcal{V}$ (which can depend on $p$) such that $ \| p (A) \| = \| p ( A_{\mathcal{V}})\|$, then $A$ satisfies Conjecture \ref{CC}. This formulation will be more applicable when one does not know an extremal function for $A$.
\end{Remark}

Mimicking the arguments used in the theorem above shows that if Conjecture \ref{CC} holds for $n \times n$ matrices, then non-cyclic $(n+1) \times (n+1)$ matrices will also satisfy Conjecture \ref{CC}. We show this in the following corollary.

\begin{Corollary}\label{induct}
    \begin{enumerate}
        \item If Conjecture \ref{CC} holds for all $n \times n$ matrices, then Conjecture \ref{CC} holds for all $(n+1) \times (n+1)$ non-cyclic matrices.
        \item If Conjecture \ref{CC} holds for all cyclic matrices, then Conjecture \ref{CC} holds in full generality (i.e. for all matrices).
    \end{enumerate} 
\end{Corollary}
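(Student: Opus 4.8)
The plan is to handle the two parts in order, with part (a) carrying essentially all the content and part (b) being a one-line induction built on top of it. Both are direct transcriptions of the arguments in Theorem \ref{maincor} together with Proposition \ref{incr}, so I expect no genuine obstacle, only some bookkeeping across dimensions and function algebras.

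For part (a), I would take an $(n+1)\times(n+1)$ non-cyclic matrix $A$. If $W(A)$ has empty interior then $A$ is normal (its numerical range is a segment or a point) and the conjecture holds, so I may assume $W(A)^o \neq \emptyset$ and hence that an extremal function $\hat f = B \circ \phi$ and a corresponding extremal vector $x$ exist, as recalled in Section \ref{2}. Since $A$ is non-cyclic, $x$ is not cyclic, so the cyclic subspace $\mathcal{V} = \operatorname{span}\{x, Ax, A^2 x, \ldots\}$ has dimension $m \leqslant n$. Because $\mathcal{V}$ is $A$-invariant, hence $\hat f(A)$-invariant, and contains the extremal vector $x$, the same computation as in the proof of Theorem \ref{maincor}(b) gives $\|\hat f(A)\| = \|\hat f(A)_{|\mathcal{V}}\| = \|\hat f(A_{\mathcal{V}})\|$, where $A_{\mathcal{V}} = A_{|\mathcal{V}}$ is an $m\times m$ matrix; one also checks $W(A_{\mathcal{V}}) \subseteq W(A)$, so that $\hat f$ restricts to a member of $\mathbf{A}(W(A_{\mathcal{V}}))$. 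By hypothesis the conjecture holds for all $n\times n$ matrices, so by Proposition \ref{incr} it holds for $A_{\mathcal{V}}$, and then by Proposition \ref{allequiv} it applies to the analytic function $\hat f$ on $W(A_{\mathcal{V}})$. Chaining the inequalities exactly as in Theorem \ref{maincor}(a),
\begin{equation*}
\sup_{f \in \mathbf{A}(W(A))} \frac{\|f(A)\|}{\max_{z \in W(A)}|f(z)|} = \frac{\|\hat f(A)\|}{\max_{z \in W(A)}|\hat f(z)|} \leqslant \frac{\|\hat f(A_{\mathcal{V}})\|}{\max_{z \in W(A_{\mathcal{V}})}|\hat f(z)|} \leqslant 2,
\end{equation*}
which by \eqref{CCsup} is Conjecture \ref{CC} for $A$.

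For part (b), I would induct on the size of the matrix. The $1\times 1$ case is trivial. Assuming Conjecture \ref{CC} holds for all $n\times n$ matrices, let $A$ be an $(n+1)\times(n+1)$ matrix: if $A$ is cyclic it satisfies the conjecture by the standing hypothesis, and if $A$ is non-cyclic it satisfies the conjecture by part (a). Hence the conjecture holds for all $(n+1)\times(n+1)$ matrices, completing the induction.

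The only points requiring care — and hence the closest thing to an obstacle — are the degenerate instances where $W(A_{\mathcal{V}})$ (or $W(A)$) is a segment or a point: there $A_{\mathcal{V}}$ is normal (or scalar), so one must invoke the normal/low-dimensional case of the conjecture and note that $\hat f$ is still continuous on the compact set $W(A_{\mathcal{V}})$, so that Proposition \ref{allequiv} legitimately upgrades the polynomial statement to $\hat f$; and one must make sure that Proposition \ref{incr} is used to pass the $n\times n$ hypothesis down to the possibly smaller $m\times m$ matrix $A_{\mathcal{V}}$. Beyond this, the argument is a faithful copy of the proof of Theorem \ref{maincor}.
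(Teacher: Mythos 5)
Your proof is correct and follows essentially the same route as the paper: part (a) applies the extremal function/extremal vector argument of Theorem \ref{maincor} to the cyclic subspace generated by the extremal vector, using Proposition \ref{incr} to pass the $n\times n$ hypothesis to $A_{|\mathcal{V}}$, and part (b) is the same induction (the paper bases it at $2\times 2$ via \cite{CC2matrix}, you at $1\times 1$). Your extra care about degenerate numerical ranges is harmless but not needed beyond what the paper already assumes about extremal functions.
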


\begin{proof}
    (a) Assume Conjecture \ref{CC} holds for all $n \times n$ matrices, and let $A$ be a $(n+1) \times (n+1)$ non-cyclic matrix. Let $\hat{f}$ be extremal for $A$ and $x$ be a corresponding extremal vector. 

    Set $\mathcal{V} = \operatorname{span} \{x, Ax, A^2x, \ldots\}$ to be the cyclic subspace generated by $x$. Then $A(\mathcal{V}) \subseteq \mathcal{V}$. Since $\dim \mathcal{V} \leqslant n$, and we have assumed Conjecture \ref{CC} holds for $n \times n $ matrices, Proposition \ref{incr} shows that Conjecture \ref{CC} holds for $A_{|\mathcal{V}}$. Thus, as $\| \hat{f} (A) \| =  \| \hat{f} (A)_{|\mathcal{V}} \| =  \| \hat{f} ( A_{|\mathcal{V}})\|$ and $W(A_{|\mathcal{V}} )\subseteq W(A)$, it follows that
    $$
    \sup_{f \in \mathbf{A}(W(A))} \frac{\|f(A)\|}{\max _{z \in W(A)}|f(z)|} = \frac{\|\hat{f}(A)\|}{\max _{z \in W(A)}|\hat{f}(z)|} \leqslant \frac{\|\hat{f}(A_{|\mathcal{V}})\|}{\max _{z \in W(A_{|\mathcal{V}})}|\hat{f}(z)|} \leqslant   2.
    $$

    (b) Let Conjecture \ref{CC} hold for all cyclic matrices and let $P(n)$ be the statement ``Conjecture \ref{CC} holds for all $n \times n$ matrices.'' As previously mentioned $P(2)$ holds by \cite{CC2matrix}. If $P(n)$ holds then part (a) combined with the assumption that Conjecture \ref{CC} hold for all cyclic matrices shows $P(n+1)$ holds. Thus the result follows by induction. 
\end{proof}



\begin{Lemma}\label{equivcyclic}
    An $n \times n$ matrix $A$ is cyclic if and only if $A^*$ is cyclic.
\end{Lemma}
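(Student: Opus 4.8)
The plan is to use the classical characterization of cyclicity in terms of the Jordan structure: an $n\times n$ matrix $A$ is cyclic if and only if, for each eigenvalue $\lambda$ of $A$, there is exactly one Jordan block associated with $\lambda$ (equivalently, the minimal and characteristic polynomials of $A$ coincide, equivalently the geometric multiplicity of every eigenvalue is $1$). Since this characterization depends only on the Jordan structure, I would prove the lemma by showing that $A$ and $A^*$ have ``the same'' Jordan structure, in the sense that their Jordan blocks are in size-preserving bijection (with conjugated eigenvalues).

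First I would recall that $A$ is similar to $A^T$ over $\mathbb{C}$ (a standard fact; see \cite[Chapter 3]{hornjohnson}), so $A$ and $A^T$ have the same Jordan canonical form and hence the same list of Jordan block sizes for each eigenvalue; in particular $A$ is cyclic if and only if $A^T$ is. Next, $A^* = \overline{A^T}$, and entrywise conjugation sends a Jordan block $J_k(\lambda)$ to $J_k(\overline{\lambda})$; more precisely, if $A^T = S\,J\,S^{-1}$ is a Jordan decomposition then $A^* = \overline{A^T} = \overline{S}\,\overline{J}\,\overline{S}^{-1}$, and $\overline{J}$ is again in Jordan form with the same block-size multiset (eigenvalues replaced by their conjugates). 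Hence the geometric multiplicity of $\overline{\lambda}$ as an eigenvalue of $A^*$ equals the geometric multiplicity of $\lambda$ as an eigenvalue of $A^T$, which equals that of $\lambda$ for $A$. Therefore every eigenvalue of $A$ has geometric multiplicity $1$ if and only if every eigenvalue of $A^*$ does, i.e. $A$ is cyclic if and only if $A^*$ is cyclic.

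Alternatively, and perhaps more self-containedly, one can argue via dimensions of Krylov subspaces together with the identity $\overline{\langle A^k x, x\rangle}$-type manipulations: a vector $x$ is cyclic for $A$ precisely when the $n$ vectors $x, Ax, \dots, A^{n-1}x$ are linearly independent, i.e. when a certain $n\times n$ ``Krylov matrix'' $K(A,x) = [\,x \mid Ax \mid \cdots \mid A^{n-1}x\,]$ is invertible. One then checks that $A$ fails to be cyclic if and only if $\det K(A,x) \equiv 0$ as a polynomial in the coordinates of $x$, a condition that can be phrased purely in terms of the minimal polynomial degree of $A$ being less than $n$; since $\deg(\text{min poly})$ is invariant under transpose and under entrywise conjugation, the claim follows. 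I would present the first argument as the main line and perhaps mention the second in a remark.

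The step I expect to require the most care is pinning down precisely why passing from $A$ to $A^*$ preserves the relevant invariant: one must be careful that entrywise conjugation is only a \emph{conjugate}-linear operation and does not, by itself, preserve similarity class, so the clean statement is that it preserves Jordan block sizes (while conjugating eigenvalues). Once that bookkeeping is done, the equivalence is immediate. I would also note for later use (it is the content of the following results on extremal vectors) that this lemma lets one transfer cyclicity hypotheses between $A$ and $A^*$, which is exactly the setting of Proposition~\ref{multi}(c).
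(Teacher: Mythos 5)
Your proof is correct, but it takes a heavier route than the paper. The paper's argument is two lines: it invokes the characterization that $A$ is cyclic if and only if its minimal and characteristic polynomials coincide (i.e.\ $A$ is non-derogatory), and then simply observes that the minimal (respectively characteristic) polynomial of $A^*$ is the coefficientwise conjugate of the minimal (respectively characteristic) polynomial of $A$, so the equality of the two polynomials is preserved in passing from $A$ to $A^*$. Your main line instead does Jordan-form bookkeeping: you use the (standard but not entirely trivial) fact that $A$ is similar to $A^T$, and then track how entrywise conjugation carries a Jordan decomposition of $A^T$ to one of $A^* = \overline{A^T}$, sending each block $J_k(\lambda)$ to $J_k(\overline{\lambda})$; this is all valid, and your care about conjugation being only conjugate-linear (so that one conjugates the whole decomposition $S J S^{-1}$ rather than appealing to preservation of similarity classes) is exactly the right point to watch. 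What the paper's route buys is brevity and avoidance of the Jordan canonical form and the $A \sim A^T$ fact altogether, since invariance of the minimal and characteristic polynomials under coefficientwise conjugation is immediate; notably, your ``alternative'' sketch (cyclicity of $A$ equivalent to the minimal polynomial having degree $n$, and that degree being invariant under transpose and entrywise conjugation) is essentially the paper's proof, so you might promote it from a remark to the main argument and keep the Jordan-form version as the more structural, self-contained alternative.
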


\begin{proof}
    This follows from the fact that a $A$ is cyclic if and only if the minimal and characteristic polynomials of $A$ are equal \cite[Lemma 25.6]{curtisbook}, and the fact that the minimal (respectively characteristic) polynomial of $A^*$ is the conjugate of the minimal (respectively characteristic) polynomial of $A$.
\end{proof}

For a cyclic $n \times n$ matrix $A$, let $v$ be cyclic for $A^*$ (such a $v$ is guaranteed to exist by Lemma \ref{equivcyclic}). For $u \in \mathbb{C}^n$, consider the entire function $\tilde{u}(z) := \langle e^{zA} u , v \rangle$. In the following theorem $E_{A,v} := \{\tilde{u}(z) : u \in \mathbb{C}^n \}$, where $\| \tilde{u} \|_{E_{A,v}} = \|u\|_{\mathbb{C}^n}$, and $\mathcal{D}_{A,v}: {E}_{A,v} \to {E}_{A,v}$ is the differentiation operator. The operator $\mathcal{D}_{A,v}$ is explicitly defined via $\mathcal{D}_{A,v} (\tilde{u}) (z) = \langle e^{zA} Au , v \rangle$. Theorem 1.1 in \cite{modelCC} states the following.

\begin{Theorem}\label{diffthm}
    If $A$ is an $n \times n$ cyclic matrix and $v$ is cyclic for $A^*$, then $A$ is unitarily equivalent to $\mathcal{D}_{A,v}$.
\end{Theorem}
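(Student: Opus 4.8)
The plan is to exhibit an explicit linear map $\Phi : \mathbb{C}^n \to E_{A,v}$, show it is a surjective isometry, and then verify it intertwines $A$ with $\mathcal{D}_{A,v}$. The natural candidate is $\Phi(u) = \tilde u$, i.e.\ $\Phi(u)(z) = \langle e^{zA} u, v\rangle$. By the very definition of the norm on $E_{A,v}$ (namely $\|\tilde u\|_{E_{A,v}} := \|u\|_{\mathbb{C}^n}$), this map is automatically norm-preserving and surjective \emph{provided} it is well-defined, i.e.\ provided $u \mapsto \tilde u$ is injective; otherwise the prescribed ``norm'' is not even well-defined on $E_{A,v}$. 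So the first—and I expect the main—step is to prove injectivity: if $\langle e^{zA} u, v\rangle = 0$ for all $z \in \mathbb{C}$, then $u = 0$. Expanding $e^{zA} = \sum_{k\ge 0} z^k A^k / k!$ and matching Taylor coefficients at $z=0$ gives $\langle A^k u, v\rangle = 0$ for every $k \ge 0$, i.e.\ $\langle u, (A^*)^k v\rangle = 0$ for all $k$. Since $v$ is cyclic for $A^*$, the vectors $\{(A^*)^k v\}_{k\ge 0}$ span $\mathbb{C}^n$, forcing $u = 0$. This is exactly where the cyclicity hypotheses on $A$ (to guarantee such a $v$ exists, via Lemma~\ref{equivcyclic}) and on $v$ are used, and it is the conceptual heart of the argument.

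Next I would record that $\Phi$ is linear (immediate from linearity of $u \mapsto e^{zA}u$ and of the inner product in its first slot), hence a linear bijection from $\mathbb{C}^n$ onto $E_{A,v}$ which is isometric by construction; in particular $E_{A,v}$ is an $n$-dimensional inner product space and $\Phi$ is unitary. It then remains to check the intertwining relation $\Phi A = \mathcal{D}_{A,v}\Phi$. This is a direct computation: on one hand $\Phi(Au)(z) = \langle e^{zA} Au, v\rangle$; on the other hand $(\mathcal{D}_{A,v}\tilde u)(z)$ is \emph{defined} in the excerpt to be $\langle e^{zA} Au, v\rangle$, so the two agree. (If one instead wants to see that $\mathcal{D}_{A,v}$ genuinely is the differentiation operator, differentiate $\tilde u(z) = \langle e^{zA}u, v\rangle$ term-by-term in the power series to get $\tilde u'(z) = \langle A e^{zA} u, v\rangle = \langle e^{zA} A u, v\rangle$, using that $A$ commutes with $e^{zA}$; this justifies the name and shows the definition is consistent.) Therefore $\Phi$ is a unitary equivalence between $A$ and $\mathcal{D}_{A,v}$.

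The only genuine obstacle is the injectivity/well-definedness step, and it is handled cleanly by the cyclicity of $v$ for $A^*$ as above; everything else is bookkeeping. One subtlety worth a sentence in the write-up: one should note that $E_{A,v}$, a priori just a set of entire functions, inherits a well-defined Hilbert-space structure \emph{only because} $\Phi$ is injective, so the statement ``$\|\tilde u\|_{E_{A,v}} = \|u\|_{\mathbb{C}^n}$'' is consistent precisely under the hypotheses of the theorem. With that observed, the proof is complete in a few lines.
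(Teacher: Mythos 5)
Your argument is correct. Note, however, that the paper does not prove this statement at all: it is quoted verbatim as Theorem~1.1 of \cite{modelCC}, so there is no internal proof to compare against, and what you have written is a self-contained justification of the cited result. Your proof is the natural one and it is sound: expanding $\tilde u(z)=\langle e^{zA}u,v\rangle=\sum_{k\ge 0}\frac{z^k}{k!}\langle u,(A^*)^k v\rangle$ and using that $\{(A^*)^k v\}_{k\ge 0}$ spans $\mathbb{C}^n$ (cyclicity of $v$ for $A^*$, which is where the hypothesis--and, via Lemma~\ref{equivcyclic}, the cyclicity of $A$--enters) gives injectivity of $u\mapsto\tilde u$; this is exactly what makes the prescribed norm $\|\tilde u\|_{E_{A,v}}=\|u\|_{\mathbb{C}^n}$ (and the formula defining $\mathcal{D}_{A,v}$) well defined, after which $\Phi(u)=\tilde u$ is a linear isometric bijection, hence unitary by polarization, and the intertwining $\Phi A=\mathcal{D}_{A,v}\Phi$ is immediate from the definitions, with the term-by-term differentiation $\tilde u'(z)=\langle e^{zA}Au,v\rangle$ confirming that $\mathcal{D}_{A,v}$ really is differentiation on $E_{A,v}$. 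You correctly identify the well-definedness issue as the only genuine content; the one small point worth making explicit in a final write-up is that the inner product on $E_{A,v}$ is the one transported from $\mathbb{C}^n$, so that ``unitary'' (not merely isometric) is justified.
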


The following theorem shows that proving the norm estimate \eqref{ECC} for the differentiation operator $\mathcal{D}_{A,v}$ associated with cyclic matrices would yield a full proof of Conjecture \ref{CC} for all matrices.

\begin{Theorem}
With the notation defined above, if for every cyclic matrix $A$ and every polynomial $p$, we have \begin{equation}\label{diffCC}
        \|p(\mathcal{D}_{A,v}) \|_{{E}_{A,v}} \leqslant 2 \sup _{z \in W(\mathcal{D}_{A,v})}|p(z)|
    \end{equation}
for some $v$ which is cyclic for $A^*$, then Conjecture \ref{CC} holds.
\end{Theorem}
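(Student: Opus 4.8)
The plan is to reduce, via Corollary \ref{induct}(b), to proving Conjecture \ref{CC} for cyclic matrices only, and then to transfer the hypothesised estimate \eqref{diffCC} back to the matrix side through the unitary equivalence furnished by Theorem \ref{diffthm}. No genuinely new ideas are needed beyond packaging these two ingredients; the statement is essentially a corollary of them.

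First I would let $A$ be an arbitrary cyclic $n \times n$ matrix. By Lemma \ref{equivcyclic}, $A^*$ is cyclic, so there is a vector $v \in \mathbb{C}^n$ cyclic for $A^*$, and I would take $v$ to be one for which the hypothesised inequality \eqref{diffCC} holds. Theorem \ref{diffthm} then says that $A$ is unitarily equivalent to the differentiation operator $\mathcal{D}_{A,v}$ on the $n$-dimensional space $E_{A,v}$ (note $\dim E_{A,v} = n$ precisely because $v$ is cyclic for $A^*$, which is what makes $u \mapsto \tilde{u}$ injective and the norm $\|\tilde u\|_{E_{A,v}} = \|u\|_{\mathbb{C}^n}$ well defined, so that $\mathcal{D}_{A,v}$ is an operator on a finite-dimensional inner product space and $p(\mathcal{D}_{A,v})$ and $W(\mathcal{D}_{A,v})$ have their usual meanings).

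Next, since the operator norm and the numerical range are invariant under unitary equivalence, for every polynomial $p$ one has $\|p(A)\| = \|p(\mathcal{D}_{A,v})\|_{E_{A,v}}$ and $W(A) = W(\mathcal{D}_{A,v})$. Combining these identities with the assumed estimate \eqref{diffCC} gives
\[
\|p(A)\| = \|p(\mathcal{D}_{A,v})\|_{E_{A,v}} \leqslant 2 \sup_{z \in W(\mathcal{D}_{A,v})}|p(z)| = 2 \sup_{z \in W(A)}|p(z)|,
\]
so Conjecture \ref{CC} holds for $A$. As $A$ was an arbitrary cyclic matrix, Conjecture \ref{CC} holds for all cyclic matrices, and Corollary \ref{induct}(b) then upgrades this to Conjecture \ref{CC} in full generality.

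I do not anticipate a substantial obstacle; the only point needing care is the order of quantifiers on $v$. One must select, for the given cyclic matrix $A$, a vector $v$ witnessing \eqref{diffCC}, and it is exactly this $v$ that is fed into Theorem \ref{diffthm}. If the hypothesis only provides such a $v$ per polynomial rather than uniformly in $p$, the displayed chain of (in)equalities still goes through polynomial-by-polynomial, so the conclusion is unaffected; I would remark on this explicitly to avoid any ambiguity.
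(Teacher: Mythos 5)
Your proposal is correct and follows essentially the same route as the paper: reduce to cyclic matrices via Corollary \ref{induct}(b), then transfer the assumed bound \eqref{diffCC} to $A$ through the unitary equivalence of $A$ with $\mathcal{D}_{A,v}$ from Theorem \ref{diffthm}. Your added remarks on the choice of $v$ and on unitary invariance of norm and numerical range are just explicit spellings-out of what the paper's proof leaves implicit.
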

\begin{proof}
    By Corollary \ref{induct} part (b), in order to prove Conjecture \ref{CC} in full generality, it is enough to prove Conjecture \ref{CC} for cyclic matrices. If $A$ is a cyclic matrix and $v$ is cyclic for $A^*$, Theorem 1.1 in \cite{modelCC} shows that $A$ is unitarily equivalent to $\mathcal{D}_{A,v}$, thus if \eqref{diffCC} holds, Conjecture \ref{CC} will hold for $A$. 
\end{proof}



In the article \cite{CCnilpotent}, Crouzeix gives a detailed analysis of $3 \times 3$ nilpotent matrices and ultimately proves that $3 \times 3$ nilpotent matrices satisfy Conjecture \ref{CC}. For a 2-nilpotent matrix, as the cyclic subspace generated by any vector is one or two dimensional, we immediately deduce the following corollary to part $(b)$ of Theorem \ref{maincor}.

\begin{Corollary}\label{nilpotent}
    Let $A$ be an $n \times n$ matrix such that $A^2 = 0$. Then the matrix $A$ satisfies Conjecture \ref{CC}.
\end{Corollary}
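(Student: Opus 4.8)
The plan is to invoke part (b) of Theorem \ref{maincor}. First I would take an extremal function $\hat f$ for $A$ together with a corresponding extremal vector $x$, so that $\|\hat f(A) x\| = \|\hat f(A)\|\,\|x\|$; both exist by the discussion preceding Theorem \ref{maincor} (the supremum in \eqref{CCsup} is attained by some $\hat f$ of the form $B\circ\phi$, and since $\hat f(A)$ acts on the finite-dimensional space $\mathbb{C}^n$, its operator norm is attained at some unit vector $x$).

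Next I would identify the cyclic subspace $\mathcal V = \operatorname{span}\{x, Ax, A^2 x, A^3 x, \ldots\}$ generated by $x$. Because $A^2 = 0$, every power $A^k$ with $k \geq 2$ annihilates $x$, so $\mathcal V = \operatorname{span}\{x, Ax\}$ and hence $\dim \mathcal V \leq 2$. This is precisely the hypothesis of Theorem \ref{maincor}(b), so that theorem yields at once that $A$ satisfies Conjecture \ref{CC}.

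There is essentially no obstacle here: the substance of the argument is entirely contained in Theorem \ref{maincor}, whose proof in turn reduces matters to the already-established cases of $1\times 1$ and $2\times 2$ matrices. The only role played by the hypothesis $A^2 = 0$ is that it forces the cyclic subspace generated by \emph{any} vector — in particular by the extremal vector $x$ — to be at most two-dimensional, which is all that part (b) of Theorem \ref{maincor} requires.
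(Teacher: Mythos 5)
Your argument is correct and is exactly the paper's: since $A^2=0$, the cyclic subspace generated by any vector (in particular an extremal vector) is spanned by $x$ and $Ax$, hence at most two-dimensional, and Theorem \ref{maincor}(b) applies. Nothing further is needed.
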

\begin{Remark}
Corollary \ref{nilpotent} provides a swift alternative algebraic proof of the result that has appeared several time previously in the literature. Recently, \cite[Theorem 6]{CCthesis}, proves Conjecture \ref{CC} holds for 2-nilpotent matrices. Alternatively, by combining \cite{minpolyCC} with \cite{CC2matrix} one can show that all matrices with minimum polynomial of degree $2$ satisfy Conjecture \ref{CC}. Crouzeix also noted in \cite{CCnilpotent} that for 2-nilpotent matrices, the numerical range is a disc, and thus must satisfy Conjecture \ref{CC}.
\end{Remark}

Theorem \ref{maincor} may lead one to consider if every $3 \times 3$ matrix has an extremal function with a corresponding extremal vector which is non-cyclic (if this were true, this would prove Conjecture \ref{CC} in the positive for $ 3 \times 3$ matrices). However, the following example shows this is not the case.

\begin{Example}
    Let $$
    J = \begin{pmatrix}
        0  & 1 & 0 \\
        0 & 0 & 1 \\
         0 & 0 & 0
    \end{pmatrix}
    $$ be a nilpotent Jordan block. Then by Theorem \ref{Jmatrix}, the (unique up to scalar multiplication) extremal function is $\hat{f} (z) =  z^2$. The (unique up to scalar multiplication) extremal vector for 
    $$
     J^2 = \begin{pmatrix}
        0 & 0 & 1 \\
        0 & 0 & 0 \\
        0 & 0 & 0
    \end{pmatrix}
    $$ is $\begin{pmatrix}
        0 \\
        0 \\
        1
    \end{pmatrix}$, which is cyclic for $J$. Thus the only extremal vectors for $J$ are cyclic.
\end{Example}

Nonetheless there are examples of cyclic matrices for which we can still apply Theorem~\ref{maincor} as the following example shows. 

\begin{Example}
    Let $$
    A = \begin{pmatrix}
        0  & 1 & 0 \\
        0 & 0 & 1-t \\
         0 & 0 & 0
    \end{pmatrix},
    $$ where $1 - \frac{1}{\sqrt{3}} \leqslant t \leqslant \sqrt{3} - 1$. It is readily checked that $\begin{pmatrix}
         0 \\
         0 \\
         1
    \end{pmatrix}$ is a cyclic vector for $A$. As highlighted in \cite[Section 2.2.2]{CCthesis} the extremal function for $A$ is $\hat{f}(z) = z$, and the corresponding extremal vector for $\hat{f}(A) = A$ is $\begin{pmatrix}
         0 \\
         1 \\
         0
    \end{pmatrix}$, which is not cyclic. Hence by part $(b)$ of Theorem \ref{maincor}, $A$ satisfies Conjecture \ref{CC}.
\end{Example}

Our workings lead us to ask the following questions.
\begin{Question}\label{Q1}
Which matrices $A$ have an extremal function $\hat{f}$ such that  $ \| \hat{f} (A) \| = \| \hat{f} ( A_{\mathcal{V}})\|$ for some two dimensional subspace $\mathcal{V}$?
\end{Question}

\begin{Question}\label{Q2}
Which matrices $A$ have the property that for each polynomial $p$  one can find a subspace $\mathcal{V}$ such that $\dim \mathcal{V} \leqslant 2$ and $ \| p (A) \| = \| p ( A_{\mathcal{V}})\|$?
\end{Question}

Notice that an affirmative answer to Questions \ref{Q1} or \ref{Q2} implies that the matrix $A$ satisfies Conjecture~\ref{CC}.

\section{Symmetric matrices and truncated Toeplitz operators}\label{4}

Complex symmetric operators are infinite dimensional generalisations of symmetric matrices. The past fifteen years has seen an explosion of research interest into complex symmetric operators. In pursuit of a proof of Conjecture \ref{CC} for symmetric matrices, in this section we investigate what role truncated Toeplitz operators play when studying the numerical ranges of symmetric matrices.

The Hardy space $H^2$ consists of all analytic functions on $\mathbb{D}$ whose Taylor coefficients are square summable, that is,
$$
H^2:= \left\{  f(z)  = \sum_{n \in \mathbb{N}_0} a_n z^n : \sum_{n \in \mathbb{N}_0 } |a_n|^2 < \infty \right\},
$$
which is a Hilbert space with the inner product defined by $$\left\langle \sum_{n \in \mathbb{N}_0} a_n z^n , \sum_{n \in \mathbb{N}_0} b_n z^n \right\rangle = \sum_{n \in \mathbb{N}_0} a_n \overline{b_n}.$$
It is well known that, for $f\in H^2$, the limits
$$
    \tilde f(e^{it}) := \lim_{r\to 1} f(re^{it})
$$
exist for almost every $t$, and $\tilde f\in L^2(\T)$ (here $\T = \partial \mathbb{D}$ denotes the unit circle). If we set $\widetilde H^2 := \{\tilde f : f\in H^2\} \subset L^2(\T)$, then $H^2$ and $\widetilde H^2$ are isometrically isomorphic, and $\widetilde H^2 = \{f \in L^2(\T) : f_n = 0\ \text{for}\ n<0\}$, where $f_n$ denotes the $n$th Fourier coefficient of $f$. We refer the reader to \cite{duren1970theory, nikolski2002operators} for a detailed background on the Hardy space.

We say a function $\theta \in H^2$ is \emph{inner} if $| \theta | = 1$ a.e. on $\mathbb{T}$. For an inner function $\theta$, we define the \emph{model space}, $K_\theta^2$, by $K_\theta^2 = (\theta H^2)^{\perp} \cap H^2 $. For example, if $\theta(z) = z^n$, then $K_\theta^2 = \text{span} \{ 1, z, ... ,z^{n-1} \}$. If $\theta(z) = \prod_{i=1}^n \frac{z - a_i}{1-\overline{a_i}z}$ for distinct $a_1, ..., a_n $ lying in the unit disc, then $K_\theta^2 = \text{span} \{ k_{a_1}, ..., k_{a_n} \}$ where $k_{a_i} = \frac{1}{1- \overline{a_i}z} \in H^2$ is the reproducing kernel at $a_i$. For further details on model spaces, see \cite{modelspacesbook}.

The \emph{truncated Toeplitz operator} (which we will abbreviate to TTO), $A_g^{\theta}: K_{\theta}^2 \to K_{\theta}^2$, having symbol $g \in L^{\infty} (\mathbb{T})$ is defined by
$$
A_g^{\theta} (f) = P_{\theta} (g f)
$$
where $P_\theta$ is the orthogonal projection $L^2 (\mathbb{T}) \to K_\theta^2$. In the special case when $\theta(z) = z^n$, $A_g^{\theta}$ is a $n \times n$ Toeplitz matrix. Further we note that compressed shift operators are TTOs with the symbol $g(z) = z$, and in particular, the authors of \cite{CCgorkin} recently emphasised their role in the study of Crouzeix’s conjecture.

TTOs have gained considerable interest from the operator theory community in the past fifteen years. We refer the reader to \cite{recentprogressGarcia, Isabellesurvey} for survey articles on the topic and to \cite{CCTTO, CCTTO2} for articles concerning the numerical ranges of TTOs.

Symmetric matrices are important for Crouzeix’s conjecture because they can mimic the numerical range of any matrix $A$, in the sense that for any matrix $A$ there exists a symmetric matrix, $S$, of the same dimensions as $A$ such that $W(A) = W(S)$ (see \cite{PSNR}). The following open question posed in \cite{recentprogressGarcia} shows direct sums of TTOs may play an important role in the understanding of numerical ranges of matrices and proving Crouzeix’s conjecture.

\begin{Question}\label{question}
Is every symmetric matrix unitarily equivalent to a direct sum of TTOs?
\end{Question}

Through conjugation maps (and specifically the use of orthonormal bases which are invariant under conjugation maps), one can show that every TTO is unitarily equivalent to a symmetric matrix (see \cite[Section 2]{symrep} for details). Conversely, Question \ref{question} is known to be true for $2$-by-$2$ and $3$-by-$3$ matrices (see \cite{recentprogressGarcia} and \cite{garciaUETTOanalyticsymbols} respectively), rank one matrices \cite{garciaUETTOanalyticsymbols} and to several inflations of truncated Toeplitz operators \cite{strousse}.

The following lemma uses these facts to show that TTOs serve as model operators for the numerical ranges of $3 \times 3$ matrices.

\begin{Lemma}
    For any $3 \times 3$ matrix $A$, there exists an inner function $\theta$ and $g_1, g_2 \in L^{\infty}(\mathbb{T})$ such that $W(A) = W(A_{g_1}^{\theta} \oplus A_{g_2}^{\theta})$ (summands may be 0).
\end{Lemma}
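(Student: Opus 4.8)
The plan is to lean on the two structural facts just recalled: every matrix admits a complex symmetric matrix with the same numerical range (\cite{PSNR}), and every $3\times 3$ complex symmetric matrix is unitarily equivalent to a direct sum of truncated Toeplitz operators (the affirmative answer to Question \ref{question} for $3\times 3$ matrices, \cite{garciaUETTOanalyticsymbols}). Since the numerical range is a unitary invariant and $W(B\oplus C)=\text{conv}(W(B)\cup W(C))$ (a standard fact, used in the proof of Proposition \ref{directsum}), everything reduces to repackaging a finite direct sum of TTOs into exactly two summands built over a common inner function $\theta$.

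First I would fix a $3\times3$ symmetric matrix $S$ with $W(S)=W(A)$ and write $S$, up to unitary equivalence, as $\bigoplus_{i=1}^{k}A_{g_i}^{\theta_i}$. Comparing dimensions forces each $\theta_i$ to be a finite Blaschke product with $\sum_{i=1}^{k}\deg\theta_i=3$, so the multiset of degrees is $\{3\}$, $\{2,1\}$, or $\{1,1,1\}$. The case $\{1,1,1\}$ folds into the case $\{3\}$: then every summand is $1\times1$, so $S$ is unitarily equivalent to a diagonal matrix $\operatorname{diag}(\lambda_1,\lambda_2,\lambda_3)$, and any diagonal matrix is unitarily equivalent (via the discrete Fourier transform) to a circulant matrix, which is a Toeplitz matrix and hence a TTO on $K_{z^3}^2$. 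So it is enough to treat two configurations: $S\cong A_{g}^{\theta}$ with $\deg\theta=3$, and $S\cong A_{g}^{\theta}\oplus[\lambda]$ with $\deg\theta=2$ and $\lambda\in\mathbb{C}$ (the degree-one block being a scalar).

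In the first configuration, set $g_1:=g$, choose any point $c\in W(A)$ (for example an eigenvalue of $A$), and let $g_2\equiv c$; then $A_{g_2}^{\theta}=cI$ and $W(A_{g_1}^{\theta}\oplus A_{g_2}^{\theta})=\text{conv}(W(A)\cup\{c\})=W(A)$. In the second configuration, set $g_1:=g$ and $g_2\equiv\lambda$, so that $A_{g_2}^{\theta}=\lambda I$ and $W(A_{g_1}^{\theta}\oplus A_{g_2}^{\theta})=\text{conv}(W(A_{g}^{\theta})\cup\{\lambda\})=W(S)=W(A)$. (If "summands may be $0$" is read as permitting an absent summand, the padding block in the first configuration can simply be dropped.)

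I do not expect a deep obstacle: the mathematical weight sits in the two cited theorems, and what remains is the bookkeeping of the partition of $3$ together with the harmless device of inserting a prescribed point of $W(A)$ via a constant symbol. The step most deserving of care is the diagonal case, where one must genuinely exhibit a diagonal matrix as a TTO; I would do this through the circulant identity above, or, at the $2\times 2$ level, by noting that $\operatorname{diag}(\alpha,\beta)$ is unitarily equivalent to the symmetric $2\times2$ circulant matrix with diagonal entries $\tfrac12(\alpha+\beta)$ and off-diagonal entries $\tfrac12(\alpha-\beta)$, which is a Toeplitz matrix. One should also keep in mind that the degree-one summand in the $\{2,1\}$ case is literally a complex scalar, so its numerical range is a single point of $W(A)$, which is exactly what validates the constant-symbol substitution.
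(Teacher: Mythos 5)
Your proposal is correct and follows essentially the same route as the paper: pass to a symmetric matrix $S$ with $W(S)=W(A)$ via \cite{PSNR}, invoke the known $3\times 3$ symmetric-to-TTO representation results of \cite{garciaUETTOanalyticsymbols}, and conclude using $W(B\oplus C)=\operatorname{conv}(W(B)\cup W(C))$. The only difference is organisational: you split cases by the degrees of the inner functions rather than by (ir)reducibility, and you explicitly arrange a common $\theta$ for the two summands (constant symbols, circulant realisation of the diagonal case), a detail the paper's proof treats more briefly.
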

\begin{proof}
As previously mentioned, \cite{PSNR} shows that for any matrix $A$ there exists a symmetric matrix, $S$, of the same dimensions as $A$ such that $W(A) = W(S)$. If $S$ is irreducible, then $S$ is unitarily equivalent to a TTO \cite[Theorem 5.2]{garciaUETTOanalyticsymbols}. If $S$ is reducible, it is unitarily equivalent to a direct sum of a $2 \times 2$ and $1 \times 1$ matrix. Then since every $2 \times 2 $ matrix is unitarily equivalent to a TTO \cite[Theorem 5.2]{spatialiso}, $S$ is unitarily equivalent to a direct sum of two TTOs. 


Thus, in all cases for the matrix $S$ we can find (up to) two TTOs $A_{g_1}^{\theta}, A_{g_2}^{\theta}$ such that $W(S) = W(A_{g_1}^{\theta} \oplus A_{g_2}^{\theta})$.
\end{proof}

If Question \ref{question} was shown to be positive, one could make straightforward adaptations to the lemma above, and show that TTOs serve as model operators for the numerical ranges of all matrices. Related to this, we also have the following proposition.

\begin{Proposition}

The following statements are equivalent:
\begin{itemize}
    \item[(a)] Conjecture \ref{CC} holds for all TTOs on three dimensional model spaces with analytic symbols,
    \item[(b)] Conjecture \ref{CC} holds for all $ 3 \times 3$ symmetric matrices,
    \item[(c)] Conjecture \ref{CC} holds for all TTOs on three dimensional model spaces.
\end{itemize}

\end{Proposition}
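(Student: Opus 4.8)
The plan is to prove the cycle of implications $(b) \Rightarrow (c) \Rightarrow (a) \Rightarrow (b)$, exploiting the structural facts already assembled in this section about the relationship between symmetric matrices and truncated Toeplitz operators.

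For $(c) \Rightarrow (a)$ the implication is trivial, since analytic-symbol TTOs on three dimensional model spaces form a subclass of all TTOs on three dimensional model spaces.

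For $(b) \Rightarrow (c)$, I would use the fact (quoted in the excerpt, via \cite[Section 2]{symrep}) that every TTO is unitarily equivalent to a symmetric matrix, together with Proposition \ref{multi}(b) and the unitary invariance of both sides of \eqref{ECC}. Concretely, a TTO $A_g^\theta$ on a three dimensional model space $K_\theta^2$ is a three dimensional operator, hence unitarily equivalent to a $3\times 3$ matrix; by \cite{symrep} that matrix may be taken symmetric. If Conjecture \ref{CC} holds for all $3\times 3$ symmetric matrices, it therefore holds for $A_g^\theta$.

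For $(a) \Rightarrow (b)$, the key point is that, by \cite[Theorem 5.2]{garciaUETTOanalyticsymbols}, every irreducible $3\times 3$ symmetric matrix is unitarily equivalent to a TTO on a three dimensional model space \emph{with an analytic symbol}. I would argue: let $S$ be a $3\times 3$ symmetric matrix. If $S$ is irreducible, it is unitarily equivalent to such an analytic-symbol TTO and we are done by $(a)$. If $S$ is reducible, it is unitarily equivalent to a direct sum of a $2\times 2$ matrix and a $1\times 1$ matrix; the $1\times 1$ block is normal (so satisfies Conjecture \ref{CC}), the $2\times 2$ block satisfies Conjecture \ref{CC} by \cite{CC2matrix}, and hence by Proposition \ref{directsum} so does $S$. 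The main obstacle --- and the only place where care is genuinely needed --- is verifying that \cite[Theorem 5.2]{garciaUETTOanalyticsymbols} really does deliver an \emph{analytic} symbol for the irreducible case and covers all irreducible $3\times 3$ symmetric matrices (not merely a generic subfamily); granting that, the rest is the routine reducibility bookkeeping already rehearsed in the preceding lemma. I would close by remarking that exactly the same argument, with the $3\times 3$ restriction removed, would upgrade to arbitrary dimension the moment Question \ref{question} is resolved affirmatively.
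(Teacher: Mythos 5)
Your proposal is correct and follows essentially the same route as the paper: the same cycle $(b)\Rightarrow(c)\Rightarrow(a)\Rightarrow(b)$, with $(b)\Rightarrow(c)$ via the fact that every TTO is unitarily equivalent to a symmetric matrix \cite[Section 2]{symrep} and $(a)\Rightarrow(b)$ via \cite[Theorem 5.2]{garciaUETTOanalyticsymbols} plus Proposition \ref{directsum} and the $2\times 2$ case \cite{CC2matrix}. The only cosmetic difference is that the paper organises $(a)\Rightarrow(b)$ by the trichotomy in Theorem 5.2 (direct sum, rank one handled by Proposition \ref{rank1}, or analytic-symbol TTO), whereas you use the irreducible/reducible dichotomy; these amount to the same argument since rank-one $3\times 3$ matrices are reducible.
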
\label{symtto}
\begin{proof}
We first prove $a \implies b$. By \cite[Theorem 5.2]{garciaUETTOanalyticsymbols}, any $3 \times 3$ symmetric matrix $S$ is unitarily equivalent to (at least one of) the following: \begin{enumerate}
    \item[(i)] The direct sum of matrices.
    \item[(ii)] A rank one matrix.
    \item[(iii)] A TTO with an analytic symbol.
\end{enumerate}
As Conjecture \ref{CC} holds for $2 \times 2$ matrices (\cite[Theorem 1.1]{CC2matrix}), if (i) holds, by Proposition \ref{directsum}, $S$ will satisfy the conjecture. If (ii) holds, then Proposition \ref{rank1} shows $S$ will satisfy the conjecture. If (iii) holds, then clearly by assumption $S$ will satisfy the conjecture. Thus in all cases $S$ satisfies the conjecture.

The implication $b  \implies c$ follows from the previously mentioned fact that every TTO is unitarily equivalent to a symmetric matrix (see \cite[Section 2]{symrep} for details). The implication $c \implies a$ is immediate.
\end{proof}

\section*{Acknowledgements}
\noindent Ryan O'Loughlin is grateful to EPSRC for financial support.
\newline
\noindent Jani Virtanen was supported in part by Engineering and Physical Sciences Research Council (EPSRC) grant EP/X024555/1.

\section*{Declarations}
\noindent Declarations of interest: none.

\bibliographystyle{plain}
\bibliography{bibliogrpahy.bib}
\end{document}